\documentclass[12pt,leqno]{amsart}
\usepackage{amsmath}
\usepackage{color}
\usepackage{amssymb}
\def\overset#1#2{{\mathrel{\mathop {{#2}_{}}\limits^{#1}}}}
\def\underset#1#2{{\mathrel{\mathop {{}_{} {#2}}\limits_{{#1}_{}}}}}
\def\upplim_#1{\underset{#1}{\overline\lim}\;}
\def\lowlim_#1{\underset{#1}{\underline\lim}\;}
\newtheorem{corollary}[equation]{Corollary}

\newtheorem{lemma}[equation]{Lemma}
\newtheorem{proposition}[equation]{Proposition}
\newtheorem{remark}[equation]{\indent \rm {\it Remark}}
\newtheorem{theorem}[equation]{Theorem}

\newcommand{\C}{{\mathbb{C}}}

\renewcommand{\P}{{\mathbb{P}}}

\newcommand{\supp}{\mathrm{Supp}\,}

\newcommand{\Z}{\mathbb{Z}}

\numberwithin{equation}{section}

\title[A new degenerated second main theorem]{A new degenerated second main theorem for meromorphic mappings with hypersurfaces}

\author{Gerd Dethloff and Si Duc Quang}

\address{(Gerd Dethloff)
 Universit\'e de Bretagne Occidentale\\ 
    Laboratoire de math\'{e}matiques\\
UMR CNRS 6205\\
6, avenue Le Gorgeu, BP 452 \\
   29275 Brest Cedex, France}
\email{Email: gerd.dethloff@univ-brest.fr}

\address{(Si Duc Quang)  Department of Mathematics \\
Hanoi National University of Education\\
136-Xuan Thuy, Cau Giay, Hanoi, Vietnam}
\email{Email: quangsd@hnue.edu.vn}

\begin{document}

\begin{abstract}
We establish a second main theorem with truncated counting functions for algebraically nondegenerate meromorphic mappings into a projective variety and a family of hypersurfaces in subgeneral position. The above bound of the total defect obtained from our result is better than that of the previous results. Moreover, in our result, the truncation level of the counting functions is estimated explicitly and independently of the number of hypersurfaces. Especially, we do not need the assumption that the family of hypersurfaces must satisfy the Bezout properties as imposed in some earlier studies, but only a weak Bezout property.
\end{abstract}

\def\thefootnote{\empty}
\footnotetext{
2010 Mathematics Subject Classification:
Primary 32H30, 32A22; Secondary 30D35.\\
\hskip8pt Key words and phrases: Nevanlinna theory, second main theorem, meromorphic mappings, hypersurface, subgeneral position.}

\maketitle

\section{Introduction}

\vskip0.2cm 

Let $V$ be a $k$-dimension subvariety of $\P^n(\C)$ and $\mathcal Q=\{Q_1,\ldots,Q_q\}$ a family $q$ hypersurfaces in $\P^n(\C)$. The family $\mathcal Q$ is said to be in $N$-subgeneral position $(q\ge N+1)$ with respect to $V$ if
$$ V\cap\bigcap_{j=0}^NQ_{i_j}=\emptyset\ \forall 1\le i_0<\cdots<i_{N}\le q.$$
If $N=k$, the family $\mathcal Q$ is said to be in general position with respect to $V$.

Let $f$ be a meromorphic mapping from $\C^m$ into $\P^n(\C)$. For each hypersurface $Q$ in $\P^n(\C)$ with $f(\C)\not\subset Q$, we denote by $\nu_{Q(f)}$ the pull-back of the divisor $Q$ by $f$. As usual, we denote by $T_f(r)$ the characteristic function of $f$ with respect to the hyperplane line bundle of $\P^n(\C)$ and $N^{[M]}_{Q(f)}(r)$ the counting function of $\nu_{Q(f)}$ with multiplicities truncated to level $M$ (see Section 2 for the definitions). The defect of $f$ with respect to $Q$ is defined by
$$ \delta^{[M]}_f(Q) =1-\overline{\lim\limits_{r\rightarrow\infty}}\frac{N^{[M]}_{Q(f)}(r)}{dT_f(r)}.$$
  
In 1933, H. Cartan \cite{Ca} established a second main theorem (SMT) for linearly nondegenerate meromorphic mappings and hyperplanes (i.e., hypersurfaces of degree 1) as follows.

\vskip0.2cm
\noindent
\textbf{Theorem A.} {\it Let $f:  {\C}^m \to \P^n(\C)$ be a linearly nondegenerate meromorphic mapping and $\{H_i\}_{i=1}^q$ be hyperplanes in general position in $\P^n(\C)$. Then we have}
$$\|\ (q-n-1)T_f(r) \leq \sum_{i=1}^q N^{[n]}_{H_i(f)}(r)+ o(T_f(r)).$$
Here, by the notation ``$\| P$'' we mean that the assertion $P$ holds for all $r\in [0,\infty)$ excluding a Borel subset $E$ of the interval $[0,\infty)$ with $\int_E dr<\infty$. In the above theorem, the truncation level is $n$ and the total defect is bounded above by $n+1$, i.e., $\sum_{i=1}^q\delta^{[n]}_{H_i(f)}\le n+1$.

If the map $f$ is linearly degenerate, we may regard $f$ as a linearly nondegenerate mapping into the smallest subspace $V$ of $\P^n(\C)$ containing the image $f(\C)$. However in that case, the family of hyperplanes $\{H_i\}_{i=1}^q$ may not be in general position with respect to $V$, but in subgeneral position. By introducing the notion of Nochka weight, in 1983 Nochka \cite{Noc83} gave the following degenerated second main theorem for meromorphic mappings with hyperplanes as follows.

\vskip0.2cm
\noindent
\textbf{Theorem B} (cf. \cite{Noc83}). {\it Let $f:  {\C}^m \to \P^n(\C)$ be a linearly nondegenerate meromorphic mapping and $\{H_i\}_{i=1}^q$ a  family of  hyperplanes in $N$-subgeneral position in $\P^n(\C)$ $(N\ge n)$ such that $f(\C^m)\not\subset H_i\ (1\le i\le q)$. Then we have
$$\|\ \ (q-2N+n-1)T_f(r) \leq \sum_{i=1}^q N^{[n]}_{H_i(f)}(r)+ o(T_f(r)).$$}
In this SMT, the truncation level is $n$ and the total defect is bounded above by $2N-n+1$.

Over the last few decades, there have been several results generalizing this theorem to the case where the hyperplanes are replaced by hypersurfaces. The first SMT for hypersurfaces is given by A. E. Eremenko and M. L. Sodin \cite{ES}. They considered an arbitrary holomorphic curve $f$ from $\C$ into $\P^N(\C)$ with $q$ hypersurfaces $\{Q_i\}_{i=1}^q$ in general position and shown that
$$\|\  (q-2N-\epsilon)T_f(r) \leq \sum_{i=1}^q \dfrac{1}{\deg Q_i}N(r,f^*Q_i),$$
for every $\epsilon >0$. Then, from this SMT, the total defect is bounded above by $2N$, but the truncation level is lost.
 
Recently, motivated by the method from Diophantine approximation, many authors have established the SMTs for meromorphic mappings and hypersurfaces in subgeneral position. Firstly, consider the case where $f:\C^m\rightarrow\P^n(\C)$ is an algebraically nondegenerate meromorphic mapping and $\mathcal Q=\{Q_1,\ldots,Q_q\}$ is a family of hypersurfaces not containing the image of $f$. By using the filtration method of P. Corvaja and U. Zannier \cite{CZ}, some authors have given some SMTs for such mapping $f$ with the family $\mathcal Q$. We list here the names of them and the above bounds of the total defect they obtained:

$\bullet$ M. Ru \cite{Ru04}: Assume that $\mathcal Q$ is in general position in $\P^n(\C)$. The total defect is bounded above by $n+1$, the truncation level is lost.

$\bullet$ T. T. H. An-H. T. Phuong \cite{AP}: Assume that $\mathcal Q$ is in general position in $\P^n(\C)$. The total defect is bounded above by $n+1$ and the truncation level is $M= 2d\left\lceil 2^n(n+1) n(d+1) \varepsilon^{-1}\right\rceil^n$, where $d=lcm(\deg Q_1,\ldots,\deg Q_q)$ and  $\lceil x\rceil$ stands for the smallest integer not less than the real number $x$. Also, by $[x]$ we denote the largest integer not exceeding $x$. 

Later on, motivated by the method of J. Evertse and R. Ferretti in studying on the Schmidt's subspace theorem, M. Ru \cite{Ru09} initially established the SMT for algebraically nondegenerate holomorphic curves into a projective subvariety $V\subset\P^n(\C)$ of dimension $k$ with a family of hypersurfaces $\mathcal Q=\{Q_i\}_{i=1}^q$ in general position with respect to $V$. The total defect in his result is bounded above by $k+1$, but the truncation level is not considered. By developing the method of M. Ru, many authors have given second main theorems for the case where $\mathcal Q$ in $N$-subgeneral position with respect to $V$. We list here the names of them with their above bounds of the total defect:
\begin{itemize}
\item Z. Chen, M. Ru and Q. Yan \cite{CRY}; the total defect is bounded by $N(k+1)$ and truncation level is not considered.
\item L. Shi and M. Ru \cite{LR}; the total defect is bounded above by $\frac{N(N-1)(k+1)}{N+k-2}$ and truncation level is not estimated.
\item L. Giang \cite{LG}; the total defect is bounded above by $N(k+1)+\epsilon$ and the truncation level is $L=\lceil e^kd^{k^2+k}\times([7 k N(q+1)\epsilon^{-1}]+2)^k\deg(V)^{k+1}\rceil$, where $d=lcm(\deg Q_1,\ldots,\deg Q_q)$.
\end{itemize}
The most optimal bound above of the total defect for the case of hypersurfaces in subgeneral position with respect to $V$ is given by the second named author as follows:
\begin{itemize}
\item S. D. Quang \cite{Q19}; the total defect is bounded above by $(N-k+1)(k+1)+\epsilon$ and the truncation level is $$L=\left[\deg(V)^{k+1}e^kd^{k^2+k}(N-k+1)^k(2k+4)^k(k+1)^k(q!)^k \epsilon^{-k}\right].$$
\end{itemize}
Recently, in \cite{Q22a,Q22c} the second named author defined the notion of the distributive constant of $\mathcal Q$ with respect to $V$ (cf. \cite[Definition 3.3]{Q22a}) as follows:
$$ \Delta_{\mathcal Q,V}:=\underset{\emptyset\ne\Gamma\subset\{1,\ldots,q\}}\max\dfrac{\sharp\Gamma}{\dim V-\dim\left (V\cap\bigcap_{j\in\Gamma} Q_j\right )}.$$
(note that $\dim\emptyset =-\infty$). For $V=\P^n(\C)$, we will write $\Delta_{\mathcal Q}$ for $\Delta_{\mathcal Q,\P^n(\C)}$. Using this notion, Quang has proved the following SMT for an arbitrary family $\mathcal Q$ of hypersurfaces: 
$$\bigl \|\ \left (q-\Delta_{\mathcal Q,V}(k+1)-\epsilon\right) T_f(r)\le\sum_{i=1}^q\frac{1}{\deg Q_i}N^{[M]}_{Q_i(f)}(r)+o(T_f(r)),$$
where $M=\left[d^{k^2+k}\deg (V)^{k+1}e^k\Delta_{\mathcal Q,V}^k(2k+4)^k(k+1)^k(q!)^k\epsilon^{-k}\right]$. 

By simple computation, we have that $\Delta_{\mathcal Q,V}\le (N-k+1)$ (see \cite[p. 168]{Q22a}), but the inequality may become the equality in many cases. Therefore, although the above result covers most previous results for the case of  hypersurfaces in subgeneral position, in the case of an arbitrary family of hyperplanes in $N$-subgeneral position, we are still unable to replace the constant $(N-k+1)$ by a smaller constant which depends only on $N$ and $k$. Recently, S. Lei and Q. Yan \cite{LQ} gave a refinement of the above result of Quang. But they need the family of hypersurfaces satisfying the Bezout properties
$$\operatorname{codim}_V\bigl(\bigcap_{j\in I\cup J}Q_j\bigl) \leq \operatorname{codim}_V\bigl(\bigcap_{j\in I}Q_j\bigl) + \operatorname{codim}_V\bigl(\bigcap_{j\in J}Q_j\bigl)$$
for every $I, J\subset\{1, \ldots, q\},$ and if the family of hypersurfaces is in $N$-general position then they obtained that 
$$\|\ \bigl(q - \min \bigl\{N-k, \frac{N-k}{2} + 1\bigl\} - p(k+1) - \varepsilon\bigl) T_{f}(r)\leq \sum_{j=1}^q \frac{1}{\deg Q_j} N^{[M_0]}(r, f^*Q_j)$$
where $p = \frac{N-k+2}{2} + \frac{N-k}{2k}$ and $M_0 = \left[ (\deg \varphi(V))^{k+1} e^k \Delta^k (2k+5)^k (k+1)^k (q!)^k \varepsilon^{-k} \right]$. Then, we may see that the defect obtained is still far from the expectation $(2N-k+1)$. Hence, it is believed that the above bound of the total defect of the SMT for the case of subgeneral hypersurfaces can be reduced. Also, we see that the truncation levels given in the above mentioned results (of Quang \cite{Q19,Q22a}, Giang \cite{LG} and Lei-Yan \cite{LQ}) for this case depend on the number $q$ of involving hypersurfaces. Then, their application may be restricted (eg. for the ramification problem or the uniqueness problem of meromorphic mappings). 

Our purpose in this paper is to improve the SMT for meromorphic mappings with families of hypersurfaces in subgeneral position with respect to a subvariety $V\subset\P^n(\C)$. Our improvements include reducing both the previously known bound on the total defect and the truncation level, as well as weakening the assumption on the Bezout property. In particular, the truncation level in our result is explicitly estimated and is independently of the number of hypersurfaces involved. Specifically, we establish the following theorem.

 \begin{theorem}\label{1.1} 
Let $V$ be a $k$-dimension projective subvariety of  $\P^n(\C)$. Let $f$ be an algebraically nondegenerate meromorphic mapping from $\C^m$ into $V$. Let $\mathcal Q=\{Q_1,\ldots,Q_q\}$ be a family of hypersurfaces of $\P^n(\C)$ in $N$-subgeneral position with respect to $V$ such that $f(\C^m)\not\subset Q_i\ (1\le i\le q)$, and $d$ be the least common multiple of $\deg Q_1,\ldots,\deg Q_q$. Assume that the family $\mathcal Q$ satisfies the following weak Bezout property: $\operatorname{codim}_V\bigl(\bigcap_{j\in I\cup J}Q_j\bigl) \leq 2$ for every $I,J\subset\{1,\ldots,q\}$ such that $\operatorname{codim}_V\bigl(\bigcap_{j\in I}Q_j\bigl)=\operatorname{codim}_V\bigl(\bigcap_{j\in J}Q_j\bigl)=1.$ Then, for every $\epsilon >0$, 
$$\biggl \|\ \left (q-\left[\lambda+\frac{1}{2}\right]-\lambda(k+1)-\epsilon\right) T_f(r)\le\sum_{i=1}^q\frac{1}{\deg Q_i}N^{[L]}(r,f^*Q_i)+o(T_f(r)),$$
where $\lambda=\frac{N-k+2}{2}$ and $L=\lceil d^{k^2+k}\deg(V)^{k+1}e^k(2k+5)^k(\lambda^2(k+1)\epsilon^{-1}+\lambda)^k-2\rceil$.
\end{theorem}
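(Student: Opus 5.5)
The proof will follow the Evertse--Ferretti/Ru filtration scheme, as in \cite{Q19,Q22a}. The new ingredient is a purely combinatorial \emph{replacement lemma}. It converts $N$-subgeneral position into general position while paying only the factor $\lambda=\frac{N-k+2}{2}$ together with the additive loss $\left[\lambda+\frac12\right]$.

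First, replace each $Q_i$ by $Q_i^{d/\deg Q_i}$, so that all hypersurfaces have common degree $d$. Fix $z$ outside an analytic set and order the indices so that $\|Q_{i_1}(f)(z)\|\le\cdots\le\|Q_{i_q}(f)(z)\|$ (normalized). By $N$-subgeneral position, the first $N+1$ of them have empty common intersection with $V$. Define indices $t_0<t_1<\dots<t_k$ by letting $t_j$ be the first index at which $\operatorname{codim}_V\bigcap_{s\le t_j}Q_{i_s}$ reaches $j+1$. The key combinatorial claim is that one can choose $k+1$ homogeneous polynomials $P_0,\dots,P_k$, each a generic linear combination of the $Q_{i_s}$ with $s\le t_j$, which are in general position on $V$ and satisfy
$$\prod_{j=0}^{N}\|Q_{i_j}(f)(z)\|\ \ge\ C\Bigl(\prod_{j=0}^{k}\|P_j(f)(z)\|\Bigr)^{\lambda}\cdot\prod_{\text{at most }[\lambda+\frac12]\ \text{factors}}\|Q_{i_s}(f)(z)\| .$$
The weak Bezout property enters precisely here. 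Two families of codimension $1$ jointly have codimension $\le2$, so the codimension-$1$ stage absorbs many hypersurfaces cheaply. After that stage, each further increase in codimension consumes at most about $\frac{N-k}{2}$ extra hypersurfaces on average. A Nochka-type convexity and counting argument on the jump sequence $t_j$ then yields the exponent $\lambda$ in place of $N-k+1$.

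Next, with $\gamma$ the resulting weights, the pointwise inequality gives
$$\sum_i\log\frac{\|f\|^d}{|Q_i(f)|}\le \lambda\max_J\sum_{j\in J}\log\frac{\|f\|^d}{|P_j(f)|}+\Bigl[\lambda+\tfrac12\Bigr]d\log\|f\|+O(1),$$
where $J$ runs over finitely many general-position $(k+1)$-tuples. I then apply the general-position SMT with explicit truncation, proved through Hilbert weight and Chow weight estimates (Evertse--Ferretti). This is done on $V$ with the filtration of $\C[x]_{u}/I(V)_u$ by the $P_j$, for $u$ of size about $(2k+5)(\lambda^2(k+1)\epsilon^{-1}+\lambda)$. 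The Wronskian of a basis $\psi_1,\dots,\psi_{H_u}$ of this space has order at most $H_u-1$ at every point, with $H_u\le d^{k^2+k}\deg(V)^{k+1}e^k u^k$. This bounds the truncation level by the stated $L$, which is independent of $q$ because only the $k+1$ forms $P_j$ enter each filtration. Integrating over the sphere, applying the lemma on logarithmic derivatives, and using the standard estimate $N_{P(f)}-N_W\le N^{[L]}$ yields the final inequality with $\lambda(k+1)+[\lambda+\frac12]+\epsilon$.

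The main obstacle is the combinatorial replacement lemma. The difficulty is producing exactly the exponent $\lambda=\frac{N-k+2}{2}$ and the additive constant $[\lambda+\frac12]$ using only the weak Bezout property. My first attempt would be induction on $k$. In the base step I would bound how many of the smallest-norm $Q$'s can share a codimension-$1$ component, via weak Bezout. In the inductive step I would pass to $V\cap$ (that component) with $N$ reduced accordingly, checking at each step that the worst case is balanced when half of the excess $N-k$ falls in the first jump.
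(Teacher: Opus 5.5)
\textbf{Gap: the pointwise replacement lemma.} Your whole argument rests on this lemma. You do not prove it, and in the logarithmic form you actually use it is false.

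Take $V=\P^2$ (so $k=2$), $N\ge3$, and a line $H$. Let $Q_i=H\cdot R_i$ for $1\le i\le N-1$, with $R_i$ generic of degree $d-1$, and let $Q_N,\ldots,Q_q$ be generic of degree $d$. This family is in $N$-subgeneral position and has the weak Bezout property: the subfamilies with codimension-one intersection are exactly the subsets of $\{1,\ldots,N-1\}$ and the singletons.

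Normalize $\|\tilde f(z)\|=1$ and let $f(z)$ approach a generic point of $H$. The left side of your logarithmic inequality is then $(N-1)\log\frac{1}{|H(\tilde f)(z)|}+O(1)$. On the right, a general-position triple contains at most one member vanishing on $H$, since two such would meet the third on $H$. A generic linear combination of the $Q_i$ vanishes along $H$ to order at most one. So the right side is $\lambda\log\frac{1}{|H(\tilde f)(z)|}+O(1)$ with $\lambda=\frac N2<N-1$. Weak Bezout permits a cluster of up to $N-k+1$ hypersurfaces sharing a codimension-one component, and no pointwise replacement with weight $\lambda$ absorbs it.

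Your product inequality does not rescue this. The $[\lambda+\frac12]$ factors you set aside are indexed by the $z$-dependent ordering, and $\log\frac{\|\tilde f\|^d}{|Q_{i_s}(\tilde f)|}$ is not bounded pointwise by $d\log\|\tilde f\|$. After integration, $\int_{S(r)}\max_{\sharp D\le m}\sum_{i\in D}\log\frac{\|\tilde f\|^d}{|Q_i(\tilde f)|}\sigma_m$ is not bounded by $m\,dT_f(r)$ either.

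\textbf{Missing idea: pay the loss globally.} The loss $[\lambda+\frac12]$ must be paid by discarding a fixed set of hypersurfaces, independent of $z$. The paper takes a largest $\Gamma_1$ with $\operatorname{codim}_V\bigcap_{j\in\Gamma_1}Q_j=1$, and a largest such $\Gamma_2$ disjoint from it. Weak Bezout plus $N$-subgeneral position give $\sharp\Gamma_1+\sharp\Gamma_2\le N-k+2$. Keep only $\ell-1-\sharp\Gamma_2$ members of $\Gamma_1$, where $\ell=\lceil\frac{N-k+3}{2}\rceil$. This deletes at most $[\lambda+\frac12]$ hypersurfaces. In the remaining family $\mathcal P$, any $\ell$ members meet $V$ in codimension $\ge2$, hence $\Delta_{\mathcal P,V}\le\lambda$ (Lemmas \ref{new}, \ref{new2}). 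The deleted hypersurfaces are simply dropped, using only $N^{[L]}\ge0$.

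\textbf{How the paper handles $\mathcal P$.} It does not replace by $k+1$ forms at all. It maps $V$ by all members of $\mathcal P$ onto $Y\subset\P^{p-1}$ and applies the Chow weight bound of Lemma \ref{2.4} twice. The first application is to the proximity weights $c_{i,z}$. The second is to $c_i=\max\{0,\nu_{Q_i(f)}(z)-n_u\}$, to bound $\nu_{W^\alpha(\tilde F)}$ from below. That second application is what produces $N^{[L]}(r,f^*Q_i)$. Your sketch only yields truncated counting functions of the auxiliary $P_j$ and never converts them to the $Q_i$.

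\textbf{Why $L$ is independent of $q$.} It is not because each filtration involves only $k+1$ forms. It is because $\deg Y\le d^k\deg V$, and because the Evertse--Ferretti error $\frac{(2k+1)\delta}{u}\max_ic_{i,z}\le\frac{(2k+1)\delta}{u}\sum_ic_{i,z}$ is absorbed into the left-hand side, giving the coefficient $x=\frac1{\Delta_{\mathcal P,V}}-\frac{m_0}{u}$.

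Once the global deletion is made, your replacement scheme with Abel summation over the jumps $t_j$ could recover the factor $\Delta_{\mathcal P,V}$ for the proximity part. It would still need a multiplicity analogue to get the truncation.
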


 It is clear that $\left[\lambda+\frac{1}{2}\right]+\lambda(k+1)$ is smaller than $\Delta_{\mathcal Q,V}$ in many cases. Compared with the result of Lei-Yan, our upper bound for the total defect is smaller (lacking the component $\frac{N-k}{2k}(k+1)$), and the assumption of the weak Bezout property is weaker than the full Bezout property. Moreover, we note that, while Lei and Yan developed the method of \cite{Q22a} by introducing the rather complicated notion of the distributive pair, our approach is simpler: we find a way to disregard all hypersurfaces whose intersections with other hypersurfaces are ``too large''. We also see that the truncation level $L$ in our result do not depend on the number $q$. In order to do so, we employ the new lower bound for the Chow weight given in \cite{Q22b} (see Lemma \ref{2.4}) and introduce new techniques to control the error term arising from the application of the Hilbert weight estimate (Theorem \ref{2.3}).

The meromorphic mapping $f$ from $\C^m$ into $V\subset\P^n(\C)$ is said to be ramified over a hypersurface $Q$ with multiplicity at least $s$ if either $f(\C^m)\subset Q$ or $\nu_{Q(f)}(z)\ge s$ for all $z\in\supp\nu_{Q(f)}$. From Theorem \ref{1.1}, by choosing $\epsilon =\frac{1}{2}$, we immediately get the following corollary.

\begin{corollary}\label{1.2}
Let $V\subset\P^n(\C)$ be a projective subvariety of dimension $k\ge 1$. Let $\{Q_1,\ldots,Q_q\}$ be a family of hypersurfaces of $\P^n(\C)$ in $N$-subgeneral position with respect to $V$. Let $f$ be a meromorphic mapping from $\C^m$ into $V$, which is ramified over each $Q_i$ with multiplicity at least $s_i\ (1\le i\le q)$ such that
$$\sum_{i=1}^q\left(1-\frac{L}{s_i}\right)>\lambda(k+2)+1,$$
where $\lambda=\frac{N-k+2}{2}$ and $L=\lceil d^{k^2+k}\deg(V)^{k+1}e^k(2k+5)^k(2\lambda^2(k+1)+\lambda)^k-2\rceil$. Then $f$ is algebraically degenerate.
\end{corollary}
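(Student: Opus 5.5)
We derive Corollary \ref{1.2} from Theorem \ref{1.1} with $\epsilon=\frac12$, arguing by contradiction. First, with $\epsilon=\frac12$ we have $\lambda^2(k+1)\epsilon^{-1}+\lambda=2\lambda^2(k+1)+\lambda$. So the truncation level in Theorem \ref{1.1} is exactly the $L$ of the corollary.

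\textbf{A remark on the hypothesis.} The corollary as stated drops the weak Bezout assumption. When I invoke Theorem \ref{1.1}, I will carry that hypothesis along: it is implicitly inherited from the theorem, and the corollary should be read under it.

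\textbf{Comparing the constants.} Suppose $f$ is algebraically nondegenerate. Since $f$ is ramified over $Q_i$ with multiplicity at least $s_i$, we may assume $f(\C^m)\not\subset Q_i$; otherwise $f$ would be algebraically degenerate. Next we bound the defect constant in Theorem \ref{1.1}. Since $\left[\lambda+\frac12\right]\le \lambda+\frac12$, we get
$$\left[\lambda+\tfrac12\right]+\lambda(k+1)+\tfrac12\le \lambda(k+2)+1.$$
Theorem \ref{1.1} therefore gives
$$\|\ \bigl(q-\lambda(k+2)-1\bigr)T_f(r)\le \sum_{i=1}^q\frac{1}{\deg Q_i}N^{[L]}(r,f^*Q_i)+o(T_f(r)).$$

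\textbf{Estimating the counting functions.} At every point of $\supp\nu_{Q_i(f)}$ the multiplicity is at least $s_i$. Hence $\min(\nu,L)\le \frac{L}{s_i}\nu$ there, and so
$$N^{[L]}(r,f^*Q_i)\le \frac{L}{s_i}N(r,f^*Q_i).$$
The first main theorem gives $N(r,f^*Q_i)\le \deg Q_i\,T_f(r)+O(1)$. Combining these,
$$\frac{1}{\deg Q_i}N^{[L]}(r,f^*Q_i)\le \frac{L}{s_i}T_f(r)+O(1).$$
Substituting into the inequality above yields
$$\|\ \Bigl(\sum_{i=1}^q\Bigl(1-\frac{L}{s_i}\Bigr)-\lambda(k+2)-1\Bigr)T_f(r)\le o(T_f(r)).$$

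\textbf{Conclusion.} By hypothesis the coefficient on the left is a positive constant. Since $f$ is nonconstant, $T_f(r)\to\infty$, and this contradicts the inequality for $r$ outside the exceptional set. Hence $f$ is algebraically degenerate.

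There is no real obstacle in this argument. The only delicate points are the floor estimate and the implicit weak Bezout hypothesis.
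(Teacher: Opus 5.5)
Your proof is correct and is exactly the paper's argument: the paper simply applies Theorem \ref{1.1} with $\epsilon=\frac12$, and your bound $\left[\lambda+\frac12\right]+\lambda(k+1)+\frac12\le\lambda(k+2)+1$ together with $N^{[L]}(r,f^*Q_i)\le\frac{L}{s_i}N(r,f^*Q_i)\le\frac{L}{s_i}\deg Q_i\,T_f(r)+O(1)$ is the routine verification behind its word ``immediately.'' You are also right that the weak Bezout hypothesis (and the definition of $d$ as the lcm of the degrees) is dropped in the printed corollary and must be carried over from Theorem \ref{1.1}; similarly, your claim that $f(\C^m)\subset Q_i$ would make $f$ algebraically degenerate tacitly uses $V\not\subset Q_i$, which is also an unstated assumption in the paper.
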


\noindent
\textbf{Remark. }The SMT for moving hypersurfaces also has been studied intensively by many authors, such as G. Dethloff-T. V. Tan \cite{DT11,DT20}, S. D. Quang \cite{Q18,Q22c}. However, the truncation level in these results are very large and depend on the number of hyperpsurfaces.
 
 \noindent
{\bf Acknowledgements.} This work was done during a stay of the second named author at Universit\'{e} de Bretagne Occidentale with the support of CNRS. He would like to thank the university for their kind hospitality he received and thank CNRS for their support. The research of the second author is funded by Vietnam National Foundation for Science and Technology Development (NAFOSTED) under grant number 101.02-2021.12. 

\section{Basic notions and auxiliary results from Nevanlinna theory}

\noindent
{\bf A. Notation.}\ Set $\|z\| = \big(|z_1|^2 + \dots + |z_m|^2\big)^{1/2}$ for
$z = (z_1,\dots,z_m) \in \C^m$ and define $B(r) := \{ z \in \C^m : \|z\| < r\},\quad S(r) := \{ z \in \C : \|z\| = r\}\ (0<r<\infty).$
Define
$$v_{m-1}(z) := \big(dd^c \|z\|^2\big)^{m-1}\quad \quad \text{and}$$
$$\sigma_m(z):= d^c \log\|z\|^2 \land \big(dd^c\log\|z\|^2\big)^{m-1}
 \text{on} \quad \C \setminus \{0\}.$$

Let $f : \mathbb C^m \to \P^n(\C)$ be a meromorphic mapping with a reduced representation $\tilde f = ( f_0,\ldots, f_n)$. The characteristic function of $f$ is defined by
$$ T_f (r)=\int_1^{r}\frac{dt}{t}\int_{B(t)}f^*\Omega\wedge v_{m-1},$$
where $\Omega$ is the Fubini-Study form on $\P^n(\C)$. By Jensen's formula, we have
$$ T_f(r)=\int_{S(r)}\log \|\tilde f\|\sigma_m-\int_{S(1)}\log \|\tilde f\|\sigma_m,$$
where $\|\tilde f\|=(|f_0|^2+\cdots+ |f_n|^2)^{\frac{1}{2}}$.

\vskip0.2cm 
\noindent

Now for a divisor $\nu$ on $\mathbb C^m$ and for a positive integer $M$ or $M= \infty$, we set
$$\nu^{[M]}(z)=\min\ \{M,\nu(z)\},$$
and define, for $1<r<\infty$, 
\begin{align*}
N(r,\nu)=\int\limits_1^r \dfrac {n(t)}{t^{2m-1}}dt\ \text{ where }\ 
n(t) =
\begin{cases}
\int\limits_{|\nu|\,\cap B(t)}
\nu(z) \alpha^{m-1} & \text  { if } m \geq 2,\\
\sum\limits_{|z|\leq t} \nu (z) & \text { if }  m=1. 
\end{cases}
\end{align*}
Similarly, we define $n^{[M]}(t)$ and $N(r,\nu^{[M]})$ (denote it simply by $N^{[M]}(r,\nu)$).

For a nonzero meromorphic function $\varphi$ on $\C^m$, we denote by $\nu^0_{\varphi}$ (resp. $\nu^\infty_\varphi$) the zero divisor (resp. pole divisor) of $\varphi$. We write $N_{\varphi}(r)$ (resp. $N^{[M]}_{\varphi}(r)$) for the counting function $N(r,\nu^0_\varphi)$ (resp. $N^{[M]}(r,\nu^0_\varphi)$).

In this paper, for a hypersurface $Q$ of degree $d$ in $\P^n(\C)$, if there is no confusion, we use the same notation $Q$ to denote its defining homogeneous polynomial, i.e., 
$$ Q(x_0,\ldots,x_n)=\sum_{I\in\mathcal T_d}a_Ix^I,\ a_I\in\C, $$ 
where $\mathcal T_d=\{(i_0,\ldots,i_n)\in\mathbb Z^{n+1}_{\ge 0}; i_0+\cdots+i_n=d\}$, $x^I=x_0^{i_0}\cdots x_n^{i_n}$ for each $I=(i_0,\ldots,i_n)\in\mathcal T_d$. The proximity function of $f$ with respect to $Q$, denoted by $m_f (r,Q)$, is defined by
$$m_f (r,Q)=\int_{S(r)}\log\frac{\|\tilde f\|^d}{|Q(\tilde f)|}\sigma_m-\int_{S(1)}\log\frac{\|\tilde f\|^d}{|Q(\tilde f)|}\sigma_m,$$
where $Q(\tilde f)=Q(f_0,...,f_n)$. This definition is independent from the choice of the reduced representation of $f$. 

Since the counting function $N^{[M]}_{Q(\tilde f)}(r)$ does not depend on the choice of the reduced representation $\tilde f$, we just denote it by $N^{[M]}_{Q(f)}(r)$. By Jensen's formula, we have
$$N_{Q(f)}(r)=\int_{S(r)}\log |Q(\tilde f)|\sigma_m-\int_{S(1)}\log |Q(\tilde f)| \sigma_m.$$
The first main theorem in Nevanlinna theory for meromorphic mappings and hypersurfaces is stated as follows.
$$dT_f (r)=m_f (r,Q) + N_{Q(f)}(r)+O(1).$$

\vskip0.2cm 
\noindent
{\bf B. Auxiliary results.} For each meromorphic function $g$ on $\C^m$ and  an $m$-tuples $\alpha =(\alpha_1,...,\alpha_m)\in\Z^m_{\ge 0}$, we set $|\alpha|=\sum_{i=1}^m\alpha_i$ and define 
$${\mathcal D}^{\alpha}g=\frac{\partial^{|\alpha|}g}{\partial^{\alpha_{1}}z_1...\partial^{\alpha_{m}}z_m}.$$

We have the lemma on logarithmic derivative stated as follows.
\begin{lemma}[{see \cite{NO}}]\label{2.1}
Let $f$ be a nonzero meromorphic function on $\C^m.$ Then for all positive integer $k$,
$$\biggl\|\quad m\biggl(r,\dfrac{\mathcal D^{\alpha}(f)}{f}\biggl)=o(T_f(r))\ (\alpha\in \mathbb Z^m_+).$$
\end{lemma}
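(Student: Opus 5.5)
The statement to prove is Lemma \ref{2.1}, the lemma on logarithmic derivatives for meromorphic functions on $\C^m$. The plan is to reduce it to the one-variable case, handling $|\alpha|=1$ first and then iterating. Note that the statement is quoted from \cite{NO}, so within the paper one may simply cite it.

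\textbf{Step 1: one variable.} I would begin with the classical lemma for $m=1$: for a meromorphic function $g$ on $\C$,
$$\|\ m(r,g'/g)=O(\log^+ T_g(r)+\log r).$$
To prove it, differentiate the Poisson--Jensen formula on a disc of radius $\rho>r$ and estimate $|g'/g|$ on $|z|=r$. This gives
$$|g'(z)/g(z)|\le \frac{C\rho}{(\rho-r)^2}\bigl(m(\rho,g)+m(\rho,1/g)\bigr)+\sum_{\text{zeros, poles in } |w|<\rho}\frac{2\rho}{|\rho^2-\bar w z|\,|z-w|}.$$
Integrating $\log^+$ over $|z|=r$ controls the sum by the number of zeros and poles, which the first main theorem bounds by $T(\rho)$. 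Then choose $\rho=r+1/T(r)$ and apply Borel's growth lemma to $T$: the inequality $T(r+1/T(r))\le 2T(r)$ holds outside a set of finite measure. This yields an error $O(\log^+T(r)+\log r)$ outside an exceptional set $E$ with $\int_E dr<\infty$. That error is $o(T(r))$ whenever $g$ is transcendental; when $g$ is rational, $m(r,g'/g)=O(1)$.

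\textbf{Step 2: several variables, $|\alpha|=1$.} To pass to $\C^m$, I would use the standard slicing. For $u$ in the unit sphere, consider $g_u(t)=g(tu)$ for $t\in\C$, so that
$$m(r,\mathcal D^{\alpha}g/g)=\int_{S(1)} m\bigl(r,\mathcal D^{\alpha}g(\cdot u)/g_u\bigr)\sigma_m(u).$$
A partial derivative $\partial g/\partial z_j$ is not the radial derivative. The cleanest route is to write
$$\frac{\partial g}{\partial z_j}=\frac{\partial}{\partial t}\Big|_{t=0} g(z+te_j)$$
and apply the one-variable lemma on the complex lines parallel to $e_j$. Following Noguchi--Ochiai, one instead averages over a compact family of unitary changes of coordinates: after a unitary change of coordinates $\partial/\partial z_j$ is radial on almost every line through the origin, and integrating the line-wise estimates gives the same bound. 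The one-variable errors are concave in $T_{g_u}$, and
$$\int_{S(1)} T_{g_u}(r)\,\sigma_m(u)=T_g(r)+O(1).$$
So by concavity of $\log^+$, the averaged error is $O(\log^+T_g(r)+\log r)$.

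\textbf{Step 3: higher derivatives.} For $|\alpha|\ge 2$, telescope:
$$\frac{\mathcal D^{\alpha}g}{g}=\prod_{i}\frac{\mathcal D^{\alpha^{(i)}}g}{\mathcal D^{\alpha^{(i-1)}}g},$$
with $\alpha^{(i)}$ increasing by one unit at each step. Each factor is handled by Step 2 applied to $\mathcal D^{\alpha^{(i-1)}}g$. One also needs
$$T_{\mathcal D^\beta g}(r)\le C\,T_g(r)+o(T_g(r))\quad \|,$$
which follows inductively, since $N(r,\mathcal D^\beta g)\le (|\beta|+1)N(r,g)$ and the proximity function is bounded using the previous step. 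The main obstacle is Step 2: making the slicing rigorous for non-radial derivatives, and checking that the exceptional sets from the individual lines combine into a single exceptional set of finite measure.
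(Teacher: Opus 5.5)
\textbf{Verdict: the sketch has a genuine gap at Step 2.}

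The paper does not prove this lemma; it quotes it from Noguchi--Ochiai. Your remark that one may simply cite it is therefore exactly what the paper does. The argument you sketch, however, fails at Step 2, which is where all of the several-variable content lies.

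Slicing by lines through the origin gives $g_u'(t)=\sum_j u_j\,(\partial g/\partial z_j)(tu)$. So the identity $m(r,h)=\int_{S(1)}m(r,h(\cdot\, u))\,\sigma_m(u)$ turns into one-variable logarithmic derivatives only the quotient $h=\mathcal{R}g/g$, where $\mathcal{R}=\sum_j z_j\,\partial/\partial z_j$ is the radial derivative and $\mathcal{R}g(tu)=t\,g_u'(t)$. For $h=(\partial g/\partial z_j)/g$, the restriction $h(tu)$ is not a logarithmic derivative of $g_u$, and Step 1 says nothing about it.

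Your proposed remedy is false. A unitary $U$ turns $\partial/\partial z_j$ into the constant vector field in direction $Ue_j$. That field is tangent to $\C u$ only if $u\in\C\,Ue_j$, i.e.\ on a single line through the origin, not on almost every line. Averaging over $U$ cannot repair this, and in any case you need the bound for the fixed direction $e_j$, not an average over directions.

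The parallel-line slicing you mention first is not carried out either. The circles $S(r)\cap\{z'=\mathrm{const}\}$ have radii $\sqrt{r^2-|z'|^2}$ that vary with $z'$, and the slice characteristics would still have to be compared with $T_g$.

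There is also a problem with exceptional sets, which you flag but do not resolve. The exceptional sets from Step 1 depend on the slice, and uncountably many of them cannot be combined. Even for $\mathcal{R}$ you must use an exceptional-set-free inequality
$m(r,g_u'/g_u)\le C\bigl(\log^+T_{g_u}(\rho)+\log^+\rho+\log^+\frac{1}{\rho-r}\bigr)+O(1)$,
valid for all $1\le r<\rho$ with constants uniform in $u$. You then integrate it over $u$ using concavity and apply Borel's lemma only once, to $T_g$.

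\textbf{The missing idea: control all partial derivatives at once, without slicing.} A standard way runs as follows.
\begin{enumerate}
\item Take $\Psi=\frac{\sqrt{-1}\,dw\wedge d\bar w}{|w|^2(1+\log^2|w|^2)}$, a form on $\P^1(\C)$ with finite total mass. Then $g^*\Psi\wedge v_{m-1}$ is a constant times $\frac{\sum_j|\partial g/\partial z_j|^2}{|g|^2(1+\log^2|g|^2)}$ times Lebesgue measure, so it dominates each $j$ separately.
\item Write $\Psi=c\,\Omega+dd^c\varphi$, with $\Omega$ the Fubini--Study form of $\P^1(\C)$ and $\varphi$ bounded above (near $w=0$ it behaves like $-\log\log(1/|w|)$). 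By Jensen's formula, the characteristic function of $g$ with respect to $\Psi$ (defined like $T_g$ with $\Omega$ replaced by $\Psi$) is at most $c\,T_g(r)+O(1)$.
\item The calculus lemma (Borel's lemma applied twice: characteristic to ball integral, then ball to sphere) gives $\int_{S(r)}\frac{|\partial g/\partial z_j|^2}{|g|^2(1+\log^2|g|^2)}\sigma_m\le r^{C}(T_g(r)+1)^{C}$ outside a set of finite measure.
\item Concavity of $\log$, together with $\int_{S(r)}\log(1+\log^2|g|^2)\,\sigma_m=O(\log^+T_g(r))+O(1)$, yields $m(r,(\partial g/\partial z_j)/g)=O(\log^+T_g(r)+\log r)$.
\end{enumerate}
With this in place of Step 2, your Step 3 goes through. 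The rational case is handled by a direct degree count.
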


Repeating the argument in (\cite{F85}, Proposition 4.5), we have the following.

\begin{proposition}\label{2.2}
Let $F_0,\ldots ,F_{N}$ be meromorphic functions on $\mathbb C^m$ such that $\{F_0,\ldots ,F_{N}\}$ are  linearly independent over $\mathbb C.$ Then  there exists an admissible set  
$$\alpha=\{\alpha_i=(\alpha_{i1},\ldots,\alpha_{im})\}_{i=0}^{N} \subset \mathbb Z^m_{\ge 0}$$
with $|\alpha_i|=\sum_{j=1}^{m}|\alpha_{ij}|\le i \ (0\le i \le N)$ such that the following are satisfied:

(i)\  $W^{\alpha}(F_0,\ldots ,F_{N})\overset{Def}{:=}\det{({\mathcal D}^{\alpha_i}\ F_j)_{0\le i,j\le N}}\not\equiv 0.$ 

(ii) $W^{\alpha}(hF_0,\ldots ,hF_{N})=h^{N+1}W^{\alpha}(F_0,\ldots ,F_{N})$ for every nonzero meromorphic function $h$ on $\C^m.$

(iii) $W^{\alpha}(L_0(F_0,\ldots ,F_{N}),\ldots,L_N(F_0,\ldots ,F_{N}))=cW^{\alpha}(F_0,\ldots,F_{N})$ for any $N+1$ independent linear forms $L_0,\ldots,L_N$ of $N+1$ variables, where $c$ is a nonzero constant.
\end{proposition}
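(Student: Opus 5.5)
The plan is to follow Fujimoto's argument. Write $\mathcal M$ for the field of meromorphic functions on $\C^m$. For $\beta\in\Z^m_{\ge 0}$ put $\mathcal D^\beta F:=(\mathcal D^\beta F_0,\ldots,\mathcal D^\beta F_N)\in\mathcal M^{N+1}$. For $l\ge 0$ let $V_l$ be the $\mathcal M$-span of $\{\mathcal D^\beta F: |\beta|\le l\}$.

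\textbf{Step 1: $V_l=\mathcal M^{N+1}$ for large $l$.} Suppose not. Since $V_0\subset V_1\subset\cdots$ in a finite-dimensional space, the chain stabilizes at some proper subspace $V_\infty$. Then there is $0\ne c=(c_0,\ldots,c_N)\in\mathcal M^{N+1}$ with $\sum_j c_j\mathcal D^\beta F_j=0$ for all $\beta$.

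Among all such $c$, choose one with the fewest nonzero entries, and normalize one nonzero entry to equal $1$. Differentiating the relation with $\beta$ replaced by $\beta$ and $\beta+e_i$ gives $\sum_j (\partial_i c_j)\,\mathcal D^\beta F_j=0$ for all $\beta$. So $\partial_i c$ is again such a relation vector, and it has strictly fewer nonzero entries, because the normalized entry is differentiated to $0$. By minimality $\partial_i c=0$ for all $i$, so $c$ is a nonzero constant vector. Taking $\beta=0$ then gives a nontrivial $\C$-linear relation among the $F_j$, which is a contradiction.

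\textbf{Step 2: choice of $\alpha$ and $|\alpha_i|\le i$.} If $V_l=V_{l+1}$, then $V_{l+1}=V_{l+2}$. Indeed, each $\mathcal D^{\beta+e_i}F$ with $|\beta|=l+1$ is $\partial_i$ of an $\mathcal M$-combination $\sum a_\gamma\mathcal D^\gamma F$ with $|\gamma|\le l$. By the Leibniz rule it lies in $V_{l+1}$. Hence the dimensions increase strictly until they reach $N+1$, so $\dim V_l\ge\min(l+1,N+1)$.

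Build the basis greedily, level by level. Set $\alpha_0=0$. At level $l+1$, add vectors $\mathcal D^{\beta+e_i}F$ whose $\beta$ is already chosen until $V_{l+1}$ is spanned. By the same Leibniz computation, these vectors together with $V_l$ indeed span $V_{l+1}$. Ordering the chosen indices by increasing $|\cdot|$ gives $|\alpha_i|\le i$, since the $(i{+}1)$-th basis vector is reached by level $i$.

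Admissibility (every $\gamma\le\alpha_i$ coordinatewise also occurs among the $\alpha$'s) is where I expect the real care to lie. What the construction gives directly is that each new index extends a chosen one by a unit step. To secure full downward closedness I would follow Fujimoto's bookkeeping. If needed, I would instead use a lexicographic choice: at each level pick the smallest $\beta$ that enlarges the span, and then check by the Leibniz argument that any $\gamma<\beta$ not chosen at a lower level would already span the required direction. Property (i) holds because the chosen rows form a basis of $\mathcal M^{N+1}$.

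\textbf{Step 3: (ii) and (iii).} By Leibniz,
$$\mathcal D^{\alpha_i}(hF_j)=h\,\mathcal D^{\alpha_i}F_j+\sum_{\gamma<\alpha_i}c_{\gamma}(h)\,\mathcal D^{\gamma}F_j,$$
with coefficients $c_\gamma(h)$ independent of $j$. By admissibility every such $\gamma$ is some $\alpha_{i'}$ with $i'<i$. Row operations then remove the lower terms, giving $W^\alpha(hF)=h^{N+1}W^\alpha(F)$.

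For (iii), the derivatives commute with constant linear forms. So the matrix for $(L_0(F),\ldots,L_N(F))$ is $(\mathcal D^{\alpha_i}F_j)\cdot A^{T}$, where $A$ is the invertible coefficient matrix of the $L_k$. Hence $c=\det A\ne0$.
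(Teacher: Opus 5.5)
Your outline is the argument the paper has in mind. The paper gives no proof of its own and only points to Fujimoto's Proposition 4.5; its extra item (iii) you handle correctly. Step 1, Step 2, (i) and (iii) are correct as written.

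\textbf{The gap is in (ii).} You derive it from ``by admissibility every such $\gamma$ is some $\alpha_{i'}$'', i.e.\ from downward closedness of $\{\alpha_i\}$. You never prove this property; you explicitly defer it. Your greedy construction does not produce it either: it only guarantees that each new index is a unit-step extension of an already chosen one. For example, with $m=2$ and $F=(1,w,w^2)$, $w=z_1+z_2$, your procedure may choose $\{0,e_1,e_1+e_2\}$, for which $W^\alpha(F)=2$, and this set omits $e_2$. So, as written, (ii) rests on an unproved property.

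\textbf{First repair, using what you already proved.} Your construction guarantees that, for every $l$, the chosen rows $\mathcal D^{\alpha_{i'}}F$ with $|\alpha_{i'}|\le l$ form an $\mathcal M$-basis of $V_l$. In the Leibniz expansion of $\mathcal D^{\alpha_i}(hF_j)$, every $\gamma\le\alpha_i$ with $\gamma\ne\alpha_i$ has $|\gamma|<|\alpha_i|$. Hence $\mathcal D^\gamma F\in V_{|\alpha_i|-1}$, which is spanned by the chosen rows of order $<|\alpha_i|$. Therefore $\bigl(\mathcal D^{\alpha_i}(hF_j)\bigr)=T\cdot\bigl(\mathcal D^{\alpha_i}F_j\bigr)$, where, with rows ordered by $|\alpha_i|$, $T$ is lower triangular with all diagonal entries equal to $h$. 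So $\det T=h^{N+1}$, and no downward closedness is needed.

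\textbf{Second repair, if you want a downward-closed set.} Run the greedy choice along a graded monomial order $\prec$: accept $\beta$ iff $\mathcal D^\beta F\notin\operatorname{span}_{\mathcal M}\{\mathcal D^\gamma F:\gamma\prec\beta\}$.
\begin{itemize}
\item If $\beta$ is rejected, write $\mathcal D^\beta F=\sum_{\gamma\prec\beta}a_\gamma\mathcal D^\gamma F$ and apply $\partial_i$.
\item The result expresses $\mathcal D^{\beta+e_i}F$ through $\mathcal D^\gamma F$ and $\mathcal D^{\gamma+e_i}F$.
\item Here $\gamma\prec\beta\prec\beta+e_i$ and $\gamma+e_i\prec\beta+e_i$, so $\beta+e_i$ is rejected as well.
\end{itemize}
Thus the rejected indices form an upward-closed set, and the accepted ones are downward closed. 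Because the order is graded, the accepted indices of degree $\le l$ still form a basis of $V_l$. The bound $|\alpha_i|\le i$ then follows exactly as in your Step 2.
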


\vskip0.2cm 
\noindent

Let $X\subset\P^n(\C)$ be a projective subvariety of dimension $k$ and degree $\delta$. For $\textbf{a} = (a_0,\ldots,a_n)\in\mathbb Z^{n+1}_{\ge 0}$ we write ${\bf x}^{\bf a}$ for the monomial $x^{a_0}_0\cdots x^{a_n}_n$. Let $I=I_X$ be the prime ideal in $\C[x_0,\ldots,x_n]$ defining $X$. Let $\C[x_0,\ldots,x_n]_m$ denote the vector space of homogeneous polynomials in $\C[x_0,\ldots,x_n]$ of degree $m$ (including $0$). For $m = 1, 2,\ldots,$ put $I_m :=\C[x_0,\ldots,x_n]_m\cap I$ and define the Hilbert function $H_X$ of $X$ by
\begin{align*}
H_X(m):=\dim (\C[x_0,\ldots,x_n]_m/I_m).
\end{align*}
Let ${\bf c}=(c_0,\ldots,c_n)$ be a tuple in $\mathbb R^{n+1}_{\ge 0}$ and let $e_X({\bf c})$ be the Chow weight of $X$ with respect to ${\bf c}$. For ${\bf a}$ as above, we set ${\bf a}\cdot{\bf c}=\sum_{i=0}^na_ic_i$. The $u$-th Hilbert weight $S_X(u,{\bf c})$ of $X$ with respect to ${\bf c}$ is defined by
\begin{align*}
S_X(u,{\bf c}):=\max\left (\sum_{i=1}^{H_X(u)}{\bf a}_i\cdot{\bf c}\right),
\end{align*}
where the maximum is taken over all sets of monomials ${\bf x}^{{\bf a}_1},\ldots,{\bf x}^{{\bf a}_{H_X(u)}}$ whose residue classes modulo $I$ form a basis of $\C[x_0,\ldots,x_n]_u/I_u.$

The following theorem is due to J. Evertse and R. Ferretti \cite{EF1,EF2}.
\begin{theorem}[{Theorem 4.1 \cite{EF1}}]\label{2.3}
Let $X\subset\P^n(\C)$ be an algebraic variety of dimension $k$ and degree $\delta$. Let $u>\delta$ be an integer and let ${\bf c}=(c_0,\ldots,c_n)\in\mathbb R^{n+1}_{\geqslant 0}$.
Then
$$ \dfrac{1}{uH_X(u)}S_X(u,{\bf c})\ge\dfrac{1}{(k+1)\delta}e_X({\bf c})-\dfrac{(2k+1)\delta}{u}\cdot\left (\max_{i=0,\ldots,n}c_i\right). $$
\end{theorem}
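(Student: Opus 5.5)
The plan is to reduce both sides of Theorem \ref{2.3} to combinatorics of a monomial ideal by a Gröbner degeneration, and then compare explicitly.

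\emph{Reductions.} Both sides are homogeneous of degree one in ${\bf c}$. Also, $S_X(u,\cdot)$ is a maximum of finitely many linear forms and $e_X(\cdot)$ is continuous. Hence it suffices to treat ${\bf c}\in\mathbb Z^{n+1}_{\ge 0}$, and after a small generic perturbation we may assume ${\bf c}$ is generic. Fix the weight order $\prec_{\bf c}$ on monomials, refined by a fixed monomial order to break ties, and let $J=\mathrm{in}_{\bf c}(I)$ be the initial ideal of $I=I_X$.

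\emph{Step 1: the Hilbert weight in terms of $J$.} By the standard Gröbner basis argument, the monomials of degree $u$ not in $J$ (the standard monomials) give a basis of $\C[x]_u/I_u$. Moreover, among all monomial bases this one maximizes $\sum {\bf a}_i\cdot{\bf c}$. Indeed, for any monomial basis, the elements can be matched injectively with standard monomials of weight at least as large, by reducing modulo the Gröbner basis. This gives
\[
S_X(u,{\bf c})=\sum_{{\bf x}^{\bf a}\notin J,\ |{\bf a}|=u}{\bf a}\cdot{\bf c}.
\]

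\emph{Step 2: the Chow weight in terms of $J$.} The flat family $t\mapsto\lambda(t)\cdot X$, with $\lambda(t)=\mathrm{diag}(t^{c_0},\ldots,t^{c_n})$, degenerates $X$ to the scheme defined by $J$. Its top-dimensional cycle is $\sum_{T}m_T L_T$, where $L_T$ runs over the coordinate $k$-planes spanned by $\{x_j:j\in T\}$, $|T|=k+1$, and $\sum_T m_T=\delta$. The Chow weight is the weight of $\lambda$ on the Chow form, which is constant in flat families of cycles. Since the Chow weight of $L_T$ is $\sum_{j\in T}c_j$, we get
\[
e_X({\bf c})=\sum_T m_T\sum_{j\in T}c_j .
\]

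\emph{Step 3: the model computation.} By symmetry, the sum of ${\bf a}\cdot{\bf c}$ over all degree-$u$ monomials in the variables of $T$ equals $\binom{u+k}{k}\frac{u}{k+1}\sum_{j\in T}c_j$. Thus the "model" standard set, namely the union with multiplicities $m_T$ of all monomials supported on $T$, has weight exactly $\frac{u\binom{u+k}{k}}{(k+1)\delta}\,\delta^{-1}\!\cdot\delta\, e_X({\bf c})$, i.e. weight $\frac{u\binom{u+k}{k}}{k+1}e_X({\bf c})$, and it has cardinality $\delta\binom{u+k}{k}$.

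\emph{Step 4: explicit comparison (the main obstacle).} One must compare the true standard set of degree $u$ with this model set with an error that is uniform in $u>\delta$, not merely asymptotic. The intended route is a primary-decomposition/filtration argument on $\C[x]/J$. The lower-dimensional and embedded components contribute at most $O(\delta u^{k-1})$ standard monomials in total. The $m_T$-multiple structure along $L_T$ can be filtered by $m_T$ quotients, each isomorphic in degree $u$ to monomials on $T$ shifted by degree at most $\delta$.

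Each discrepant monomial changes the weight by at most $u\max_i c_i$. Counting them carefully should give a total discrepancy of at most $(2k+1)\delta\cdot H_X(u)\max_i c_i$ in the weight sum. In parallel, one needs to control $H_X(u)$ against $\delta\binom{u+k}{k}$. To do this, I would first run the whole argument in the case where $X$ is itself a union of coordinate subspaces, and then use induction on $k$ via hyperplane sections $x_j=0$ to bound the defect terms.

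\emph{Conclusion.} Dividing by $uH_X(u)$ then yields the claimed inequality. Getting the precise constant $(2k+1)\delta/u$, valid for all $u>\delta$, is where I expect the real work to lie.
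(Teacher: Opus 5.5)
\emph{Verdict: the proposal has a genuine gap.} Steps 1--3 are correct, but Step 4, which must carry the explicit error term, is not carried out, and the route sketched for it does not work.

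The paper quotes this result from Evertse--Ferretti without proof, so the question is whether your sketch amounts to a proof. Steps 1--3 are a sound reduction, and they correctly identify the main term. This requires $J$ to be the ideal of \emph{lowest}-${\bf c}$-weight initial terms, i.e.\ the limit $t\to\infty$ of $\lambda(t)\cdot X$. With the usual highest-weight convention the standard monomials would minimise $\sum{\bf a}_i\cdot{\bf c}$ rather than maximise it.

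The whole content of the theorem, however, is the explicit error $(2k+1)\delta/u$. Step 4, which is supposed to produce it, only says the count ``should'' work. The plan sketched there also fails in three ways:
\begin{itemize}
\item The claim that lower-dimensional and embedded components of $J$ contribute $O(\delta u^{k-1})$ standard monomials is unjustified and false in general. Take the smooth rational curve $X=\{(s^\delta:s^{\delta-1}t:st^{\delta-1}:t^\delta)\}\subset\P^3$ and ${\bf c}=(1,0,0,1)$. The top-dimensional component of $J$ is a $\delta$-fold structure on the line $x_1=x_2=0$. Its standard monomials are $x_0^{a_0}x_3^{a_3}x^b$ with $b$ in an order ideal $B$ of $\delta$ monomials in $x_1,x_2$. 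Since $H_X(u)=\delta u+1$ for large $u$, the embedded part contributes $\sum_{b\in B}|b|-\delta+1$ monomials, which is at least of order $\delta^{3/2}$.
\item You ignore that the standard sets of different planes $L_T$ and $L_{T'}$ overlap.
\item The comparison of $H_X(u)$ with $\delta\binom{u+k}{k}$ is left as an unexecuted induction, although it is a genuine theorem.
\end{itemize}

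The missing idea is to \emph{discard} the embedded part instead of counting it. Let $J^{\rm top}=\bigcap_T\mathfrak q_T\supseteq J$, where $\mathfrak q_T$ are the $(x_j)_{j\notin T}$-primary components.
\begin{itemize}
\item Since ${\bf c}\ge 0$, Step 1 shows that $S_X(u,{\bf c})$ is at least the weight of the degree-$u$ monomials outside $J^{\rm top}$.
\item The denominator is handled by the bound $H_X(u)\le\delta\binom{u+k}{k}$ for irreducible $X$ (Chardin's theorem; the paper itself uses it to bound $n_u$).
\item The monomials outside $\mathfrak q_T$ are $x^{\bf a}x^b$ with ${\bf a}$ supported on $T$ and $b$ in an order ideal $B_T$ of $m_T$ monomials in the remaining variables. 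Hence $\sum_{b\in B_T}|b|\le\binom{m_T}{2}$.
\item The layer of $b$ has weight at least $\binom{u-|b|+k}{k}\frac{u-|b|}{k+1}\sum_{j\in T}c_j$. This falls short of a model layer by at most $(k+1)|b|\binom{u+k}{k}\max_i c_i$.
\item For $T\ne T'$ the two standard sets share at most $m_Tm_{T'}\binom{u+k-1}{k-1}$ monomials of degree $u$, each of weight at most $u\max_i c_i$. Moreover $u\binom{u+k-1}{k-1}\le k\binom{u+k}{k}$.
\item Summing, the weight outside $J^{\rm top}$ is at least $\frac{u\binom{u+k}{k}}{k+1}e_X({\bf c})-(k+1)\binom{\delta}{2}\binom{u+k}{k}\max_i c_i$.
\item Dividing by $uH_X(u)\le u\delta\binom{u+k}{k}$ gives the inequality, in fact with the smaller error $\frac{(k+1)(\delta-1)}{2u}\max_i c_i$.
\end{itemize}
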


The following lemma is proved by the second named author \cite{Q22b} for the case of number fields, but it automatically holds for the case of the complex field.
\begin{lemma}[{see \cite[Lemma 3.2]{Q22b}}]\label{2.4}
Let $Y$ be a projective subvariety of $\P^R(\C)$ of dimension $k\ge 1$ and degree $\delta_Y$. Let $\ell\ (\ell\ge k+1)$ be an integer and let ${\bf c}=(c_0,\ldots,c_R)$ be a tuple of non-negative reals. Let $\{H_0,\ldots,H_R\}$ be a set of hyperplanes in $\P^R(\C)$ defined by $H_{i}=\{y_{i}=0\}\ (0\le i\le R)$. Let $\{i_1,\ldots, i_\ell\}$ be a subset of $\{0,\ldots,R\}$ such that:
\begin{itemize}
\item[(1)] $c_{i_\ell}=\min\{c_{i_0},\ldots,c_{i_\ell}\}$,
\item[(2)] $Y\cap\bigcap_{j=1}^{\ell}H_{i_j}= \emptyset$, 
\item[(3)] and $Y\not\subset H_{i_j}$ for all $j=1,\ldots,\ell$.
\end{itemize}
Let $\Delta_{\mathcal H,Y}$ be the distributive constant of the family $\mathcal H=\{H_{i_j}\}_{j=1}^\ell$ with respect to $Y$. Then
$$e_Y({\bf c})\ge \frac{\delta_Y}{\Delta_{\mathcal H,Y}}(c_{i_1}+\cdots+c_{i_\ell}).$$
\end{lemma}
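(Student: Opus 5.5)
The statement just above is Lemma \ref{2.4}, quoted from \cite{Q22b}. I would prove it by induction on $\dim Y$, using the filtration (or successive hyperplane-section) characterization of the Chow weight. After relabeling, assume $c_{i_1}\ge c_{i_2}\ge\cdots\ge c_{i_\ell}$; this is compatible with hypothesis (1). The basic tool is the Evertse--Ferretti lower bound for $e_Y(\mathbf c)$ in terms of a chain of intersections. Concretely, if $Y=Y_0\supsetneq Y_1\supsetneq\cdots\supsetneq Y_k\supsetneq Y_{k+1}=\emptyset$ with $Y_t=Y\cap H_{i_1}\cap\cdots\cap H_{i_{s_t}}$ and $\dim Y_t=k-t$, then
$$e_Y(\mathbf c)\ \ge\ \delta_Y\sum_{t=1}^{k+1}c_{i_{s_t}}.$$
Here $s_t$ is the first index at which the dimension of the running intersection drops to $k-t$. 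This follows from \cite[Lemma 5.1 / Prop. 4.5]{EF1}: the Chow form $F_Y(\mathbf u)$ restricted to the torus action $u_{ij}\mapsto t^{c_j}u_{ij}$ has leading exponent bounded below by repeatedly factoring through the hypersurfaces $\{y_{i}=0\}$ that cut down the dimension. Alternatively it can be proved by induction via $e_Y(\mathbf c)\ge \deg(Y)\,c_{i}+e_{Y\cap H_i}(\mathbf c)$ for $Y\not\subset H_i$, together with monotonicity of $e$ in $\mathbf c$. Hypotheses (2) and (3) guarantee that the chain exists, that it terminates at $\emptyset$, and that the first step is a genuine drop.

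Next I convert the chain bound into the averaged bound with $\Delta=\Delta_{\mathcal H,Y}$. Set $s_0=0$. By definition of $\Delta$, applied to $\Gamma=\{i_1,\dots,i_{s_t}\}$, the codimension of $Y_t$ gives
$$s_t\le \Delta\cdot(k-\dim Y_t)=\Delta\, t \quad (t\le k).$$
For $t=k+1$ we have $s_{k+1}\le\ell$, and $\ell\le \Delta(k+1)$ because $Y\cap\bigcap H_{i_j}=\emptyset$ has dimension $-\infty$. The nonincreasing ordering gives $c_{i_{s_t}}\ge c_{i_j}$ for all $j\in(s_{t-1},s_t]$ and also $c_{i_{s_t}}\ge c_{i_j}$ for $j>s_t$ inside the next block. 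A standard Abel-summation argument then shows
$$\sum_{j=1}^{\ell}c_{i_j}\le\Delta\sum_{t=1}^{k+1}c_{i_{s_t}}.$$
Writing $c_{i_{s_t}}=\sum_{t'\ge t}(c_{i_{s_{t'}}}-c_{i_{s_{t'+1}}})$, with $c_{i_{s_{k+2}}}:=0$, the left side is bounded by $\sum_t (c_{i_{s_t}}-c_{i_{s_{t+1}}})\,s_{t+1}$. Since $s_{t+1}\le\Delta(t+1)$, the right side is what we get after summing by parts. Here $c_{i_\ell}$ being the minimum, which is hypothesis (1), handles the last block $j\in(s_k,\ell]$, where we only know $\ell\le\Delta(k+1)$. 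Combining the two displays yields $e_Y(\mathbf c)\ge\frac{\delta_Y}{\Delta}(c_{i_1}+\cdots+c_{i_\ell})$.

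I expect the main obstacle to be the first step: establishing the chain lower bound for the Chow weight with the full degree $\delta_Y$ in each term rather than a degree of a component. The difficulty appears when the intersections $Y_t$ are reducible or nonreduced. I would handle it by proving the inequality $e_Y(\mathbf c)\ge\delta_Y c_i+e_{Y\cap H_i}(\mathbf c)$ using the Chow-form factorization of Evertse--Ferretti, with $Y\cap H_i$ taken as a cycle of degree $\delta_Y$. I would then apply induction to each component, using $e$ additive on cycles and the fact that each component of $Y_t$ still meets the remaining hyperplanes with the right dimension drop.
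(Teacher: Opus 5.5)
\paragraph{The main gap: the bound $\ell\le\Delta(k+1)$.} You use this bound for the last block, but it does not follow. Under the paper's convention $\dim\emptyset=-\infty$, a subset $\Gamma$ with $Y\cap\bigcap_{j\in\Gamma}H_j=\emptyset$ contributes $\sharp\Gamma/(k-(-\infty))=0$ to $\Delta_{\mathcal H,Y}$. So the emptiness in (2) gives no bound on $\ell$ at all.

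This cannot be repaired from (1)--(3), because the lemma as transcribed is false. Take $Y=\{y_0+y_1+y_2=0\}\subset\P^2$ (so $k=\delta_Y=1$), $\ell=3$, $\{i_1,i_2,i_3\}=\{0,1,2\}$ and $\mathbf c=(1,1,1)$. Each $Y\cap H_i$ is a point and each $Y\cap H_i\cap H_j$ $(i\ne j)$ is empty. Hence $\Delta_{\mathcal H,Y}=1$, and the lemma would give $e_Y(\mathbf c)\ge 3$. But $F_Y(u_0,u_1)=\det(a,u_0,u_1)$ with $a=(1,1,1)$ is bihomogeneous of bidegree $(1,1)$, so $e_Y(\mathbf c)=2$. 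As you observed yourself, (1) can always be arranged by relabeling, which already signals that a hypothesis has been lost.

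What is needed is the extra hypothesis $Y\cap\bigcap_{j=1}^{\ell-1}H_{i_j}\ne\emptyset$. This is exactly what the proof of Theorem \ref{1.1} supplies: there the $c_{i,z}$ are ordered and $t$ is the smallest index with $V\cap\bigcap_{j\le t}Q_j=\emptyset$. With this hypothesis, $s_{k+1}=\ell$. Applying the definition of $\Delta$ to $\Gamma=\{i_1,\dots,i_{\ell-1}\}$ gives $\ell-1\le\Delta k$, and $\Delta\ge 1$ (use a single hyperplane), so $\ell\le\Delta(k+1)$. Your inequality would hold under the convention $\dim\emptyset=-1$, but that is not the paper's definition.

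\paragraph{The Abel summation is set up incorrectly.} This is a separate problem from the endpoint. For $j\in(s_{t-1},s_t]$ the nonincreasing order gives $c_{i_j}\ge c_{i_{s_t}}$, not $\le$. Moreover, the counting bound $s_t\le\Delta t$ is too weak. For $k=1$, the constraints $s_1\le\Delta$ and $s_2\le 2\Delta$ are met by $s_1=1$, $s_2=\ell=3$, $\Delta=3/2$. Then $(c_{i_1},c_{i_2},c_{i_3})=(1,1,0)$ gives $\sum_j c_{i_j}=2>\frac32=\Delta(c_{i_{s_1}}+c_{i_{s_2}})$.

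The fix is as follows.
\begin{itemize}
\item Group the indices as $[s_t,s_{t+1})$; recall $s_1=1$ by (3). All entries of this group are $\le d_t:=c_{i_{s_t}}$.
\item Use the sharper bound $\sigma_t:=s_{t+1}-1\le\Delta t$ for $1\le t\le k$. It comes from applying the definition of $\Delta$ to $\Gamma=\{i_1,\dots,i_{s_{t+1}-1}\}$, whose intersection with $Y$ still has dimension $k-t$.
\item Put $\sigma_0=0$, $\sigma_{k+1}=\ell\le\Delta(k+1)$ (from the first paragraph) and $d_{k+2}=0$. Since $d_1\ge\cdots\ge d_{k+1}\ge 0$, you get $\sum_j c_{i_j}\le\sum_{t=1}^{k+1}(\sigma_t-\sigma_{t-1})d_t=\sum_{t=1}^{k+1}\sigma_t(d_t-d_{t+1})\le\Delta\sum_{t=1}^{k+1}d_t$.
\end{itemize}

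\paragraph{The chain bound.} The bound $e_Y(\mathbf c)\ge\delta_Y\sum_t d_t$ is not the obstacle you expect. Choose $L_t$ generic in the span of $y_{i_1},\dots,y_{i_{s_t}}$, so that $Y\cap\{L_1=\cdots=L_{k+1}=0\}=\emptyset$. Then $F_Y(L_1,\dots,L_{k+1})\ne0$. So some monomial of $F_Y$ whose $t$-th block of variables is supported on $\{i_1,\dots,i_{s_t}\}$ has nonzero coefficient, and its weight is at least $\delta_Y\sum_t d_t$. This avoids your cycle induction. That induction relies on the claim that every component of $Y_t$ is cut properly by the subsequent coordinate hyperplanes, which is false in general: a component may lie inside the next $H_{i_j}$.
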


\section{Proof of Theorem \ref{1.1}}
 In order to prove Theorem \ref{1.1}, we first prove the following lemmas.
\begin{lemma}\label{new}
Let $V$ be an $k$-dimension subvariety of $\P^n(\C)\ (n\ge k)$ and $\mathcal Q=\{Q_1,\ldots,Q_q\}$ a family of hypersurfaces in $\P^n(\C)$ located in $N$-subgeneral position with respect to $V\ (N> k)$ such that $V\not\subset Q_i\ (1\le i\le q)$. Let $\ell=\lceil \frac{N-k+3}{2}\rceil$. Assume that $\dim V\cap\bigcap_{j=1}^\ell Q_{i_j}\le k-2$ for every $1\le j_1<\cdots<j_\ell\le q$. Then $\Delta_{\mathcal Q,V}\le\frac{N-k+2}{2}$.
\end{lemma}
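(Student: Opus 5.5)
The plan is to bound the ratio $\sharp\Gamma/\bigl(k-\dim(V\cap\bigcap_{j\in\Gamma}Q_j)\bigr)$ separately for every nonempty $\Gamma\subset\{1,\ldots,q\}$. Set $X_\Gamma=V\cap\bigcap_{j\in\Gamma}Q_j$ and $c_\Gamma=k-\dim X_\Gamma$. If $X_\Gamma=\emptyset$, then $\dim X_\Gamma=-\infty$ and the ratio is $0$, so there is nothing to prove. If $X_\Gamma\neq\emptyset$, then $c_\Gamma\ge 1$, since $V\not\subset Q_i$ for every $i$. I split into two cases: $c_\Gamma=1$ and $c_\Gamma\ge 2$.

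\emph{Case $c_\Gamma=1$.} Here the hypothesis on $\ell$-fold intersections is used. Suppose $\sharp\Gamma\ge\ell$ and pick $\ell$ indices in $\Gamma$. Then $X_\Gamma$ is contained in the intersection of $V$ with these $\ell$ hypersurfaces, which has dimension $\le k-2$ by assumption. Hence $\dim X_\Gamma\le k-2$, contradicting $c_\Gamma=1$. Therefore $\sharp\Gamma\le\ell-1$. Since $\ell=\lceil\frac{N-k+3}{2}\rceil\le\frac{N-k+4}{2}$, we get $\sharp\Gamma/c_\Gamma\le\ell-1\le\frac{N-k+2}{2}$.

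\emph{Case $c_\Gamma\ge 2$, $X_\Gamma\ne\emptyset$.} Write $t=\dim X_\Gamma=k-c_\Gamma\ge 0$. The key step is the standard subgeneral-position inequality $\sharp\Gamma\le N-t=N-k+c_\Gamma$. To prove it, suppose instead that $\sharp\Gamma\ge N-t+1$.
\begin{itemize}
\item If $\sharp\Gamma\ge N+1$, then $X_\Gamma=\emptyset$ by $N$-subgeneral position, a contradiction.
\item Otherwise $s:=N+1-\sharp\Gamma$ satisfies $1\le s\le t$. Because $q\ge N+1$, we can choose $s$ further indices outside $\Gamma$. Intersecting the $t$-dimensional projective variety $X_\Gamma$ with $s\le t$ hypersurfaces leaves a set of dimension $\ge t-s\ge 0$, which is nonempty by the projective dimension theorem.
\end{itemize}
In the second subcase we obtain $N+1$ hypersurfaces of $\mathcal Q$ whose common intersection with $V$ is nonempty, contradicting $N$-subgeneral position. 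This proves $\sharp\Gamma\le N-k+c_\Gamma$.

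It then remains to check $\frac{N-k+c_\Gamma}{c_\Gamma}\le\frac{N-k+2}{2}$. This is equivalent to $2(N-k)+2c_\Gamma\le (N-k)c_\Gamma+2c_\Gamma$, that is, to $(N-k)(c_\Gamma-2)\ge 0$. This holds because $N>k$ and $c_\Gamma\ge 2$. Taking the maximum over all $\Gamma$ gives $\Delta_{\mathcal Q,V}\le\frac{N-k+2}{2}$.

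The main point needing care is the dimension-counting inequality $\sharp\Gamma\le N-\dim X_\Gamma$, because it borrows hypersurfaces from outside $\Gamma$. That step relies on $q\ge N+1$, which is part of the definition of $N$-subgeneral position, and on the fact that a nonempty projective variety of dimension $t$ meets any $s\le t$ hypersurfaces. Everything else is elementary arithmetic with the ceiling defining $\ell$.
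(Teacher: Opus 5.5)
Your proof is correct and uses the same two ingredients as the paper's. The first is that the $\ell$-fold hypothesis forces codimension $\ge 2$ once $\sharp\Gamma\ge\ell$. The second is the subgeneral-position bound $\dim X_\Gamma\le N-\sharp\Gamma$.

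The difference is only in organization: you split by codimension, the paper splits by cardinality. The paper takes three cases:
\begin{itemize}
\item $\sharp\Gamma\le\frac{N-k+2}{2}$, which is trivial;
\item $\frac{N-k+2}{2}<\sharp\Gamma\le N-k+2$, where the hypothesis gives $c_\Gamma\ge 2$;
\item $\sharp\Gamma\ge N-k+3$, which uses the dimension bound and then the monotonicity of $t/(t-(N-k))$.
\end{itemize}
Your single inequality $\sharp\Gamma\le N-k+c_\Gamma$ for $c_\Gamma\ge 2$ merges the last two cases. Your codimension-one case, with $\sharp\Gamma\le\ell-1\le\frac{N-k+2}{2}$, is the same arithmetic as the paper's observation that $\sharp\Gamma>\frac{N-k+2}{2}$ forces $\sharp\Gamma\ge\ell$.

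Your version also proves $\dim X_\Gamma\le N-\sharp\Gamma$ explicitly, by borrowing $N+1-\sharp\Gamma$ hypersurfaces from outside $\Gamma$ and invoking $q\ge N+1$ and the projective dimension theorem. The paper only asserts this step as ``easy to see'', so your write-up is somewhat more complete on that point.
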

\begin{proof} 
Let $\Gamma$ be a nonempty set of $\{1,\ldots,q\}$ such that $V\cap\bigcap_{j\in\Gamma}Q_{j}\ne\emptyset$. We distinguish the following three cases:\\
Case 1: $\sharp\Gamma\le\frac{N-k+2}{2}$. It is clear that $\frac{\sharp\Gamma}{k-\dim V\cap\bigcap_{j\in\Gamma}Q_{j}}\le \frac{N-k+2}{2}.$\\
Case 2: $\sharp\Gamma\in (\frac{N-k+2}{2},N-k+2]$. By the assumption, we have $\frac{\sharp\Gamma}{k-\dim V\cap\bigcap_{j\in\Gamma}Q_{j}}\le\frac{N-k+2}{2}.$\\
Case 3: $\sharp\Gamma> N-k+2.$ Suppose that $\Gamma=\{1,\ldots,t\}$, where $t\ge N-k+3$. Since $V\cap\bigcap_{j=1}^{N+1}Q_{j}=\emptyset$, it is easy to see that $t\le N$ and $ \dim V\cap\bigcap_{j=1}^{t}Q_{j}\le N-t$. Therefore,
$$ \frac{\sharp\Gamma}{k-\dim V\cap\bigcap_{j\in\Gamma}Q_{j}}\le\frac{t}{k-(N-t)}\le\frac{N-k+3}{3}\le\frac{N-k+2}{2}.$$

Then, for every subset $\Gamma\subset\{1,\ldots,q\}$, we have $\frac{\sharp\Gamma}{k-\dim V\cap\bigcap_{j\in\Gamma}Q_{j}}\le\frac{N-k+2}{2}$. This yields that $\Delta_{\mathcal Q,V}\le\frac{N-k+2}{2}$.
\end{proof}

\begin{lemma}\label{new2} Let $V$ be an $k$-dimension subvariety of $\P^n(\C)$. Let $\mathcal Q=\{Q_1,\ldots,Q_q\}$ be a family of $q$ hypersurfaces of $\P^n(\C)$ in $N$-subgeneral position with respect to $V$. Assume that the family $\mathcal Q$ satisfies the following weak Bezout property: $\operatorname{codim}_V\bigl(\bigcap_{j\in I\cup J}Q_j\bigl) \leq 2$ for every $I,J\subset\{1,\ldots,q\}$ such that $\operatorname{codim}_V\bigl(\bigcap_{j\in I}Q_j\bigl)=\operatorname{codim}_V\bigl(\bigcap_{j\in J}Q_j\bigl)=1.$ Let $\ell=\lceil \frac{N-k+3}{2}\rceil$. Then there is a subset $\Gamma$ of $\{1,\ldots,q\}$ with $\sharp\Gamma\ge q-N+k-3+\ell$ such that the family of hypersurfaces $\mathcal P=\{Q_i;i\in\Gamma\}$ satisfies $\Delta_{\mathcal P,V}\le\frac{N-k+2}{2}$.
\end{lemma}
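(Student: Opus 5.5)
We reduce to Lemma \ref{new}: we look for a subfamily $\mathcal P=\{Q_i;\ i\in\Gamma\}$ in which every $\ell$ members meet $V$ in codimension at least $2$. Such a $\mathcal P$ is still in $N$-subgeneral position with respect to $V$, so Lemma \ref{new} applied to $\mathcal P$ gives $\Delta_{\mathcal P,V}\le\frac{N-k+2}{2}$. (If $N=k$, then every $\ell=\lceil 3/2\rceil=2$ members already meet $V$ in codimension $\ge 2$, so $\Gamma=\{1,\ldots,q\}$ works. Lemma \ref{new} needs $N>k$, so this case is handled directly; there every ratio in $\Delta$ is at most $1$ by general position.) The whole task is therefore to remove at most $N-k+3-\ell$ indices so that the remaining ones satisfy the codimension hypothesis of Lemma \ref{new}.

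\textbf{Step 1.} Call an index set $I$ \emph{bad} if $\operatorname{codim}_V\bigl(V\cap\bigcap_{j\in I}Q_j\bigr)=1$, i.e. $\dim V\cap\bigcap_{j\in I}Q_j=k-1$. If no bad $I$ has $\sharp I\ge\ell$, take $\Gamma=\{1,\ldots,q\}$. Note that $\sharp\Gamma=q\ge q-N+k-3+\ell$, since $\ell\le N-k+3$. So we may assume some bad $I$ with $\sharp I\ge\ell$ exists. Let $I_0$ be the union of all bad sets $I$ with $\sharp I\ge\ell$, and put $\Gamma=\{1,\ldots,q\}\setminus I_0$.

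\textbf{Step 2.} We claim $I_0$ is bad, and for this the weak Bezout property is essential. Take two bad sets $I,J$. The intersection over $I\cup J$ has codimension at most $2$ by the weak Bezout property, but that alone does not show it is $1$. To sharpen this we use subgeneral position. Since $\sharp I\ge\ell$, an intersection of codimension $1$ cut out by at least $\ell$ hypersurfaces leaves little room: in $N$-subgeneral position, adding hypersurfaces one at a time, each further drop in dimension costs at most one hypersurface, and the total budget is $N+1$. Concretely, if $\operatorname{codim}(I\cup J)=2$, then adding hypersurfaces to $I\cup J$ one at a time, each further drop of dimension needs at least one new index, and we must still reach the empty set. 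This forces
\[
\sharp(I\cup J)+(k+1-2)\le N,
\qquad\text{i.e.}\qquad
\sharp(I\cup J)\le N-k+1 .
\]
I expect the count to show instead that $I\cup J$ stays bad, or else that it is harmless, in which case we remove $I\cup J$ directly. This is where I am least sure, and I would first try to argue directly with the bound $\sharp(\text{any set of codimension}\le c)\le N-k+c$, which follows from subgeneral position. Applying it with $c=2$ gives $\sharp I_0\le N-k+2$.

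\textbf{Step 3.} We count what is removed. Any set of codimension at most $2$ has at most $N-k+2$ elements, so using the union structure we get $\sharp I_0\le N-k+3-\ell$, possibly with a refinement to reach exactly this bound. That gives $\sharp\Gamma\ge q-N+k-3+\ell$. Finally we check that $\mathcal P$ has no $\ell$ members meeting $V$ in dimension $k-1$: any such set would be bad of size $\ge\ell$, hence contained in $I_0$, which is disjoint from $\Gamma$. Applying Lemma \ref{new} then finishes the proof.

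The main obstacle is the sharp size bound on the removed set $I_0$. The naive estimate $N-k+2$ must be improved to $N-k+3-\ell$. I would try to get this from the fact that every element of $I_0$ lies in a bad set of size $\ge\ell$. Pick a common codimension-one component $W$ of these sets (if $I_0$ is bad), pass beyond it, and count the additional hypersurfaces needed to reach the empty set via subgeneral position.
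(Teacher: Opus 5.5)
There is a genuine gap, and it sits where you suspected: removing the whole union $I_0$ of all bad sets of size $\ge\ell$ cannot give $\sharp I_0\le N-k+3-\ell$, and $I_0$ need not even be bad. For a counterexample, take $V=\P^2$ ($k=2$), distinct lines $A,B$, generic lines $C_1,C_2,C_4,C_5$, and put $Q_1=AC_1$, $Q_2=AC_2$, $Q_3=AB$, $Q_4=BC_4$, $Q_5=BC_5$, plus generic lines $Q_6,\ldots,Q_q$. Then $V\cap Q_1\cap\cdots\cap Q_5=\{A\cap B\}$, and the family is in $5$-subgeneral position. The weak Bezout property holds automatically, since any two curves in $\P^2$ meet. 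Here $\ell=3$, and the bad sets of size $\ge 3$ are $\{1,2,3\}$ and $\{3,4,5\}$. So $I_0=\{1,\ldots,5\}$ has codimension $2$, contradicting your Step 2, and has five elements, while only $N-k+3-\ell=3$ removals are allowed. The count fails even when $I_0$ is bad. Four curves $Q_1,\ldots,Q_4$ containing a common line, plus generic lines, give $N=5$, $\ell=3$ and $\sharp I_0=4>3$. So no refinement of the counting in Step 3 can rescue this choice of removed set. (Your Step 2 count is also off by one: a set of codimension $2$ can have $N-k+2$ elements, not $N-k+1$.)

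The missing idea is that you need not remove a bad set entirely, only enough of it that no $\ell$ survivors are bad.

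The paper proceeds as follows:
\begin{itemize}
\item Take a bad set $\Gamma_1$ of maximal size $p_1$, and a bad set $\Gamma_2$ of maximal size $p_2$ disjoint from $\Gamma_1$.
\item The weak Bezout property gives $\operatorname{codim}_V\bigl(\bigcap_{j\in\Gamma_1\cup\Gamma_2}Q_j\bigr)\le 2$. Hence $p_1+p_2\le N-k+2$, and since $p_2\le p_1$, also $p_2\le\frac{N-k+2}{2}<\ell$. This is the only place weak Bezout is used.
\item Keep $\ell-1-p_2$ elements of $\Gamma_1$ and discard the other $p_1+p_2-(\ell-1)\le N-k+3-\ell$.
\end{itemize}
No $\ell$-subset $J$ of the survivors can be bad. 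Indeed, $J\cap\Gamma_1$ has at most $\ell-1-p_2$ elements. Also, $J\setminus\Gamma_1$ is either empty or a nonempty subset of a bad set, hence bad and disjoint from $\Gamma_1$, so it has at most $p_2$ elements. Thus $\sharp J\le\ell-1$. After this, Lemma \ref{new} applies exactly as you intended.
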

\begin{proof}
Let $p_1$ be the biggest integer such that there exists a subset $\Gamma_1\subset\{1,\ldots,q\}$ with $\sharp\Gamma_1=p_1$ and $\dim V\cap\bigcap_{j\in\Gamma_1}Q_j=k-1$. Let $p_2$ be the biggest integer such that there exists a subset $\Gamma_2\subset\{1,\ldots,q\}\setminus\Gamma_1$ with $\sharp\Gamma_1=p_2$ and $\dim V\cap\bigcap_{j\in\Gamma_2}Q_j=k-1$ by the weak Bezout property. Therefore $\dim V\cap\bigcap_{j\in\Gamma_1\cup\Gamma_2}Q_j\ge k-2$. This implies that $p_1+p_2\le N-k+2$ and $p_1\ge p_2$. Then $p_2\le\frac{N-k+2}{2}<\ell$.

Take $\Gamma_1'$ a subset of $\Gamma_1$ with $\sharp\Gamma_1'=(\ell-1)-p_2$ and set $\Gamma=\{1,\ldots,q\}\setminus(\Gamma_1\setminus\Gamma_1'), p=\sharp\Gamma$ and $\mathcal P=\{Q_j;\ j\in\Gamma\}.$ We may suppose that $\mathcal P=\{Q_1,\ldots,Q_p\}$ with 
\begin{align*}
p&=q-(p_1+p_2-(\ell-1))\ge q-(N-k+2)+\ell-1\\
&=q-N+k-3+\ell.
\end{align*}
Then $\mathcal P$ is a family of $p$ hypersurfaces in $N$-subgeneral position with respect to $V$ satisfying
$$\dim V\cap\bigcap_{s=1}^{\ell}Q_{j_s}\le k-2 \text{ for every }1\le j_1<\cdots<j_\ell\le p.$$ 
By Lemma \ref{new}, we have $\Delta_{\mathcal P,V}\le\frac{N-k+2}{2}$.
\end{proof} 

\begin{proof}[{\sc Proof of Theorem \ref{1.1}}]
Let $\ell=\lceil \frac{N-k+3}{2}\rceil$. By Lemma \ref{new2}, there exists a subset $\mathcal P$ of $\mathcal Q$ with $p=\sharp\mathcal P\ge q-N+k-3+\ell\ge q-\left[\frac{N-k+3}{2}\right]$ and $\Delta_{\mathcal P,V}\le\frac{N-k+2}{2}=\lambda$.
 
Without loss of generality, we suppose that $\mathcal P=\{Q_1,\ldots,Q_p\}$ and $p>\Delta_{\mathcal P,V}(k+1)$. It implies that $\bigcap_{\underset{j\ne i}{1\le j\le p}} Q_{j}\cap V=\emptyset$ for any $i$. Let $\tilde f=(f_0,\ldots ,f_n): \C^m\rightarrow \C^{n+1}$ be a reduced representation of $f$. For each hypersurface $Q_i\ (1\le i\le q)$ we denote again by $Q_i$ its defining homogeneous polynomial. Without loss of generality, we suppose that $d=\deg Q_1=\cdots=\deg Q_q$. 
Consider the mapping $\Phi$ from $V$ into $\P^{p-1}(\C)$, which maps a point ${\bf x}=(x_0:\cdots:x_n)\in V$ into the point $\Phi({\bf x})\in\P^{p-1}(\C)$ given by
$$\Phi({\bf x})=(Q_1(x):\cdots : Q_{p}(x)),$$
where $x=(x_0,\ldots,x_n)$. We set $\tilde\Phi(x)=(Q_1(x),\ldots ,Q_{p}(x))$.

Let $Y=\Phi(V)$. Since $V\cap\bigcap_{j=1}^{p}Q_j=\emptyset$, $\Phi$ is a finite morphism on $V$ and $Y$ is a complex projective subvariety of $\P^{p-1}(\C)$ with $\dim Y=k$ and of degree $\delta:=\deg Y\le d^{k}\cdot\deg V.$ 

For every ${\bf a} = (a_1,\ldots,a_p)\in\mathbb Z^p_{\ge 0}$ and ${\bf y} = (y_1,\ldots,y_p)$ we denote ${\bf y}^{\bf a} = y_{1}^{a_{1}}\cdots y_{p}^{a_{p}}$. Let $u$ be a positive integer. We set $\xi_u:=\binom{p+u-1}{u}$ and define
$$ Y_{u}:=\C[y_1,\ldots,y_p]_u/(I_{Y})_u, $$
which is a $\C$-vector space of dimension $H_{Y}(u)$. Put $n_u=H_{Y}(u)-1$ and let $v_0,\ldots,v_{n_u}$ be homogeneous polynomials in $\C[y_1,\ldots,y_p]_u$, whose equivalent classes form a basis of $Y_u$.  

Let $F$ be a meromorphic mapping from $\C^m$ into $\P^{n_u}(\C)$ with the representation:
$$ \tilde F=(v_0(\tilde\Phi\circ \tilde f),\ldots,v_{n_u}(\tilde\Phi\circ \tilde f)).$$
Since $f$ is algebraically nondegenerate, $F$ is linearly nondegenerate.
Then there exists an admissible set $\alpha=(\alpha_0,\ldots,\alpha_{n_u})\in(\mathbb{Z}^m_{\ge 0})^{n_u+1}$ such that
$$W^\alpha(F_0,\ldots,F_{n_u})=\det (D^{\alpha_i}(v_s(\Phi\circ \tilde f)))_{0\le i,s\le n_u}\not\equiv 0.$$

Fix a point $z\in\C^m$ with $Q_i(\tilde f(z))\ne 0$ for all $i=1,\ldots,p$. We define 
$${\bf c}_z = (c_{1,z},\ldots,c_{p,z})\in\mathbb R^{p}_{\ge 0},$$ 
where
$$c_{i,z}:=\log\dfrac{\|\tilde f(z)\|^d\|Q_i\|}{|Q_i(\tilde f)(z)|}\ge 0\text{ for } i=1,\ldots,p.$$
By the definition of the Hilbert weight, there are ${\bf a}_{0,z},\ldots,{\bf a}_{n_u,z}\in\mathbb N^{p}$ with
$$ {\bf a}_{j,z}=(a_{j,1,z},\ldots,a_{j,p,z}),$$ 
where $a_{j,i,z}\in\{1,\ldots,u\},$  such that the residue classes modulo $(I_Y)_u$ of ${\bf y}^{{\bf a}_{0,z}},\ldots,{\bf y}^{{\bf a}_{n_u,z}}$ form a basic of $\C[y_1,\ldots,y_p]_u/(I_Y)_u$ and
\begin{align*}
S_Y(u,{\bf c}_z)=\sum_{j=0}^{n_u}{\bf a}_{j,z}\cdot{\bf c}_z.
\end{align*}
We see that ${\bf y}^{{\bf a}_{j,z}}\in Y_u$ (modulo $(I_Y)_u$). Then we may write
$$ {\bf y}^{{\bf a}_{j,z}}=L_{j,z}(v_0,\ldots ,v_{n_u}), $$ 
where $L_{j,z}\ (0\le j\le n_u)$ are independent linear forms.
We have
\begin{align*}
\log\prod_{j=0}^{n_u} |L_{j,z}(\tilde F(z))|&=\log\prod_{j=0}^{n_u}\prod_{i=1}^p|Q_i(\tilde f(z))|^{a_{j,i,z}}\\
&=-S_Y(u,{\bf c}_z)+du(n_u+1)\log \|\tilde f(z)\| +O(u(n_u+1)).
\end{align*}
Therefore,
\begin{align}\label{3.3}
S_Y(u,{\bf c}_z) = \log\prod_{j=0}^{n_u}\dfrac{1}{|L_{j,z}(\tilde F(z))|} + du(n_u+1)\log \|\tilde f(z)\|+O(u(n_u+1)).
\end{align}
From Theorem \ref{2.3} we have
\begin{align}\label{3.4}
\dfrac{1}{u(n_u+1)}S_Y(u,{\bf c}_z)\ge&\dfrac{1}{(k+1)\delta}e_Y({\bf c}_z)-\dfrac{(2k+1)\delta}{u}\max_{1\le i\le p}c_{i,z}.
\end{align}
Combining (\ref{3.3}), (\ref{3.4}), we get
\begin{align}\nonumber
\dfrac{1}{(k+1)\delta}e_Y({\bf c}_z)\le & \dfrac{1}{u(n_u+1)}\log\prod_{j=0}^{n_u}\dfrac{1}{|L_{j,z}(\tilde F(z))|} + d\log \|\tilde f(z)\| \\
\label{3.5}
\begin{split}
&+\dfrac{(2k+1)\delta}{u}\sum_{i=1}^p\log\dfrac{\|\tilde f(z)\|^d\|Q_i\|}{|Q_i(\tilde f)(z)|}+O(1/u).
\end{split}
\end{align}
Suppose that $c_{1,z}\ge c_{2,z}\ge\cdots\ge c_{p,z}$ and denote by $t$ the smallest index such that $V\cap\bigcap_{j=1}^tQ_j=\emptyset$. Note that $\Delta_{\{Q_j\}_{j=1}^t,V}\le\Delta_{\mathcal P,V}$. Then by Lemma \ref{2.4}, we have
\begin{align}\label{3.6}
\begin{split}
e_Y({\bf c}_z)&\ge\frac{\delta}{\Delta_{\mathcal P,V}}(c_{1,z}+\cdots +c_{t,z}) =\frac{\delta}{\Delta_{\mathcal P,V}}\left (\sum_{i=1}^t\log\dfrac{\|\tilde f(z)\|^d\|Q_i\|}{|Q_i(\tilde f)(z)|}\right)\\
&=\frac{\delta}{\Delta_{\mathcal P,V}}\left (\sum_{i=1}^p\log\dfrac{\|\tilde f(z)\|^d\|Q_i\|}{|Q_i(\tilde f)(z)|}\right)+O(1).
\end{split}
\end{align}
Then, from (\ref{3.5}) and (\ref{3.6}) we have
\begin{align}\label{3.7}
\begin{split}
\dfrac{1}{\Delta_{\mathcal P,V}}\log\prod_{i=1}^p\dfrac{\|\tilde f (z)\|^d}{|Q_i(\tilde f)(z)|}\le & \dfrac{k+1}{u(n_u+1)}\log\prod_{j=0}^{n_u}\dfrac{1}{|L_{j,z}(\tilde F(z))|}+d(k+1)\log \|\tilde f(z)\|\\
&+\dfrac{(2k+1)(k+1)\delta}{u}\sum_{i=1}^p\log\dfrac{\|\tilde f(z)\|^d\|Q_i\|}{|Q_i(\tilde f)(z)|}+O(1),
\end{split}
\end{align}
where the term $O(1)$ does not depend on $z$. 

Set $m_0=(2k+1)(k+1)\delta, x=\frac{1}{\Delta_{\mathcal P,V}}-\frac{m_0}{u}$ and $b=\dfrac{k+1}{u(n_u+1)}$. From the above inequality and Proposition \ref{2.2}(iii), we get
\begin{align}\label{3.10new}
\begin{split}
\log \dfrac{\|\tilde f (z)\|^{xdp-d(k+1)}|W^\alpha(\tilde{F}(z))|^b}{\prod_{i=1}^p|Q_i(\tilde f)(z)|^{x}}\le b \log\dfrac{|W^\alpha(L_{0,z}(\tilde F(z)),\ldots,L_{n_u,z}(\tilde F(z)))|}{\prod_{j=0}^{n_u}|L_{j,z}(\tilde F(z))|}+O(1).
\end{split}
\end{align}
Here, we note that $L_{j,z}$ depends on $j$ and $z$, but the number of these linear forms is finite (at most $\xi_u$). We denote by $\mathcal L$ the set of all $L_{j,z}$ occurring in the above inequalities. The inequality (\ref{3.10new}) implies that
$$ \log \dfrac{\|\tilde f (z)\|^{xdp-d(k+1)}|W^\alpha(\tilde{F}(z))|^b}{\prod_{i=1}^p|Q_i(\tilde f)(z)|^{x}}\le b \max_{\mathcal J\subset\mathcal L}\log^+\dfrac{|W^\alpha(\tilde{F}(z))|}{\prod_{L\in\mathcal J}|L(\tilde F(z))|}+O(1)$$
for every $z\in\C^m\setminus\bigcup_{j=1}^p(Q_j(\tilde f))^{-1}\{0\}$, where the maximum $\max_{\mathcal J\subset\mathcal L}$ is taken over all subsets $\mathcal J$ of $\mathcal L$ such that $\{L;\ L\in\mathcal J\}$ is independent and $\sharp\mathcal J=n_u+1$. Integrating both sides of the above inequality and using lemma on logarithmic derivative (Lemma \ref{2.1}) for the right hand side, we obtain
\begin{align}\label{3.9}
\biggl\|\ \left(p-\frac{k+1}{x}\right)T_f(r)\le\sum_{j=1}^p\frac{1}{d}N_{Q_j(\tilde f)}(r)-\frac{b}{dx}N_{W^\alpha(\tilde{F})}(r)+o(T_f(r)).
\end{align}
We now estimate the quantity $N_{W^\alpha(\tilde{F})}(r)$. Let $z\in\C^m$ which is outside the indeterminacy locus of $f$. We set $c_{i}=\max\{0,\nu^0_{Q_i( f)}(z)-n_u\}\ (1\le i\le p)$ and 
$${\bf c}=(c_{1},\ldots,c_{p})\in\mathbb Z^p_{\ge 0}.$$
Without loss of generality, we may suppose that $c_1\ge c_2\ge\cdots\ge c_p$ and let $t$ be the smallest index such that $V\cap\bigcap_{j=1}^tQ_j=\emptyset$ as above. Then, it is clear that $c_{i}=0$ for all $i\ge t$. By the definition of the Hilbert weight, there are
$${\bf a}_i=(a_{i,1},\ldots,a_{i,p}),0\le i\le n_u, a_{i,s}\in\{1,...,u\}$$
such that ${\bf y}^{{\bf a}_0},...,{\bf y}^{{\bf a}_{n_u}}$ is a basic of $\C[y_1,\ldots,y_p]_u/(I_{Y})_u$ and
$$ S_{Y}(u,{\bf c})=\sum_{i=0}^{n_u}{\bf a}_i\cdot{\bf c}.$$
Similarly as above, we write ${\bf y}^{{\bf a}_i}=L_i(v_0,...,v_{n_u})$, where $L_0,...,L_{n_u}$ are linearly independent linear forms. We see that $W^\alpha(\tilde F)=cW^{\alpha}(L_0(\tilde F),\ldots,L_{n_u}(\tilde F))$ for a nonzero constant $c$. Therefore,
\begin{align*}
\nu_{W^\alpha(\tilde F)}(z)&=\nu_{W^{\alpha}(L_0(\tilde F),\ldots,L_{n_u}(\tilde F))}(z)\\
&\ge\sum_{i=0}^{n_u}\left(\nu_{L_i(\tilde F)}(z)-\min\{n_u,\nu_{L_i(\tilde F)}(z)\}\right)\\
&\ge\sum_{i=0}^{n_u}\sum_{j=1}^pa_{i,j}\left(\nu_{Q_j(\tilde f)}(z)-\min\{n_u,\nu_{Q_j(\tilde f)}(z)\}\right)\\
&=\sum_{i=1}^{n_u}{\bf a}_i\cdot{\bf c}=S_Y(u,{\bf c}).
\end{align*}
Also, by Lemma \ref{2.4} we have
$$e_{Y}({\bf c})\ge\frac{\delta}{\Delta_{\mathcal P,V}}(c_1+\cdots +c_t)=\frac{\delta}{\Delta_{\mathcal P,V}}(c_1+\cdots +c_p).$$
On the other hand, by Theorem \ref{2.3} we have that 
\begin{align*}
 S_{Y}(u,{\bf c}) &\ge\frac{u(n_u+1)}{(k+1)\delta}e_Y({\bf c})-(2k+1)\delta(n_u+1)\max_{1\le i\le p}c_{i}\\
&\ge \left (\frac{u(n_u+1)}{(k+1)\Delta_{\mathcal P,V}}-(2k+1)\delta (n_u+1)\right)\sum_{j=1}^{p}\max\{0,\nu^0_{Q_j(\tilde f)}(z)-n_u\}.
\end{align*}
Therefore,
\begin{align*}
\frac{b}{x}\nu_{W^\alpha(\tilde F)}(z)&\ge\frac{1}{x}\left(\frac{1}{\Delta_{\mathcal P,V}}-\frac{m_0}{u}\right)\sum_{j=1}^{p}\max\{0,\nu^0_{Q_j(\tilde f)}(z)-n_u\}\\
&=\sum_{j=1}^{p}\max\{0,\nu^0_{Q_j(\tilde f)}(z)-n_u\}=\sum_{j=1}^{p}\left(\nu^0_{Q_j(\tilde f)}(z)-\min\{n_u,\nu^0_{Q_j(\tilde f)}(z)\}\right).
\end{align*}
Since this inequality holds for all $z$ outside the indeterminacy locus of $f$, it yields that
$$ \frac{b}{x}N_{W^\alpha(\tilde F)}(r)\ge\sum_{j=1}^{p}\left(N_{Q_j(\tilde f)}(r)-N^{[n_u]}_{Q_j(\tilde f)}(r)\right).$$
Combining this inequality with (\ref{3.9}), we obtain that
\begin{align}\label{3.10}
\biggl\|\ \left(p-\frac{k+1}{x}\right)T_f(r)\le\sum_{j=1}^p\frac{1}{d}N^{[n_u]}_{Q_j(\tilde f)}(r)+o(T_f(r)).
\end{align}
Choose $u=\lceil\Delta_{\mathcal P,V}m_0(\Delta_{\mathcal P,V}(k+1)+\epsilon)\epsilon^{-1}\rceil$. Then $\frac{\Delta_{\mathcal P,V}m_0}{u}\le\frac{\epsilon}{\Delta_{\mathcal P,V}(k+1)+\epsilon}=1-\frac{\Delta_{\mathcal P,V}(k+1)}{\Delta_{\mathcal P,V}(k+1)+\epsilon}$. Therefore
$$\frac{k+1}{x}=\frac{\Delta_{\mathcal P,V}(k+1)}{1-\frac{\Delta_{\mathcal P,V}m_0}{u}}\le \Delta_{\mathcal P,V}(k+1)+\epsilon.$$
We also have $n_u=H_Y(u)-1\le\delta\binom{k+u}{k}-1\le d^k\deg(V)\binom{k+u}{k}-1.$ Hence,\\
$\bullet$ if $k=1$ then
\begin{align*}
n_u+1&\le  d\deg(V)(1+u)\\
&< d\deg(V) \left(\Delta_{\mathcal P,V}6d\deg(V)(2\Delta_{\mathcal P,V}\epsilon^{-1}+1)+2\right)\\ 
&< d^2\deg(V)^2 e\Delta_{\mathcal P,V}7(2\Delta_{\mathcal P,V}\epsilon^{-1}+1);
\end{align*}\\
$\bullet$ if $k\ge 2$ then
\begin{align*}
n_u+1&\le d^k\deg (V)e^{k}\left(1+\dfrac{u}{k}\right)^{k}\\
&<d^k\deg (V)e^k\left(1+\dfrac{d^k\deg (V)\Delta_{\mathcal P,V}(2k+1)(k+1)(\Delta_{\mathcal P,V}(k+1)\epsilon^{-1}+1)+1}{k}\right)^k\\
&<d^{k^2+k}\deg(V)^{k+1}e^k\Delta_{\mathcal P,V}^k(2k+5)^k(\Delta_{\mathcal P,V}(k+1)\epsilon^{-1}+1)^k.
\end{align*} 
Thus, we always get
\begin{align*}
n_u+2&\le \lceil d^{k^2+k}\deg(V)^{k+1}e^k\Delta_{\mathcal P,V}^k(2k+5)^k(\Delta_{\mathcal P,V}(k+1)\epsilon^{-1}+1)^k\rceil\\
&\le\lceil d^{k^2+k}\deg(V)^{k+1}e^k(2k+5)^k(\lambda^2(k+1)\epsilon^{-1}+\lambda)^k\rceil=L+2.
\end{align*}
Therefore, from (\ref{3.10}) and the above estimate we have
\begin{align}\label{3.11}
\bigl\|\ (p-\Delta_{\mathcal P,V}(k+1)-\epsilon)T_f(r)\le\sum_{j=1}^p\frac{1}{d}N^{[L]}_{Q_j(\tilde f)}(r)+o(T_f(r)).
\end{align}
We remark that $\Delta_{\mathcal P,V}\le\lambda$ and
$$p-\Delta_{\mathcal P,V}(k+1)-\epsilon\ge q-\left[\lambda+\frac{1}{2}\right]-\lambda(k+1)-\epsilon.$$
Then, the inequality (\ref{3.11}) implies that
$$\bigl\|\ \left(q-\left[\lambda+\frac{1}{2}\right]-\lambda(k+1)-\epsilon\right)T_f(r)\le\sum_{j=1}^q\frac{1}{d}N^{[L]}_{Q_j(\tilde f)}(r)+o(T_f(r)).$$
The theorem is proved.
\end{proof}

\begin{remark}{\rm  
In the above proof, we may also set $\mathcal P=\mathcal Q$, whose the distributive constant with respect to $V$ does not exceed $(N-k+1)=2\lambda-1$. Then we obtain again the second main theorem of Quang \cite[Theorem 1.1]{Q19} in the form
$$\bigl\|\ (q-(2\lambda-1)(k+1)-\epsilon)T_f(r)\le\sum_{j=1}^q\frac{1}{\deg Q_j}N^{[L]}_{Q_j(\tilde f)}(r)+o(T_f(r)),$$
with a better truncation level 
$$L=\lceil d^{k^2+k}\deg(V)^{k+1}e^k(2k+5)^k((2\lambda-1)^2(k+1)\epsilon^{-1}+(2\lambda-1))^k-2\rceil.$$}
\end{remark}

\noindent
{\bf Disclosure statement.} No potential conflict of interest was reported by the authors.

\end{document}